\renewcommand{\tilde}{\widetilde}
\renewcommand{\P}{\mathbb P}
\newcommand{\R}{\mathbb R}
\newcommand{\pr}{{\rm pr}}
\newcommand{\cupprod}{\mathbin{\smile}}
\newcommand{\capprod}{\mathbin{\frown}}
\def\pr{{\rm pr}}
\def\id{{\rm id}}
\def\PD{{\rm PD}}
\newtheorem{theorem}{\bf Theorem}[section]
\newtheorem{proposition}[theorem]{\bf Proposition}
\newtheorem*{theorem*}{\bf Theorem}
\newtheorem*{proposition12}{\bf Proposition 1.2}
\newtheorem*{proposition14}{\bf Proposition 1.4}
\newtheorem*{proposition15}{\bf Proposition 1.5}
\newtheorem*{proposition16}{\bf Proposition 1.6}
\newtheorem{lemma}[theorem]{\bf Lemma}
\theoremstyle{remark}
\newtheorem{remark}[theorem]{\bf Remark}
\newtheorem{example}[theorem]{\bf Example}
\newtheorem{question}[theorem]{\bf Question}
\begin{document}

\title{The action on cohomology by compositions of rational maps}

\begin{author}[R. K. W. Roeder]{Roland K. W. Roeder}
 \address{Department of Mathematical Sciences \\ IUPUI \\ LD Building, Room 224Q\\
402 North Blackford Street\\
Indianapolis, Indiana 46202-3267\\
 United States }
\email{rroeder@math.iupui.edu}
\end{author}

\subjclass[2010]{Primary 37F99; Secondary 32H50}
\keywords{pullback on cohomology, dynamical degrees}

\date{\today}

\begin{abstract}We use intuitive results from algebraic topology and intersection theory to clarify the
pullback action on cohomology by compositions of rational maps.  We use these techniques
to prove a simple sufficient criterion for functoriality of a composition of two rational maps  on all degrees of cohomology
and we then reprove the criteria of Diller-Favre, Bedford-Kim, and Dinh-Sibony.
We conclude with a cautionary example.
\end{abstract}

\maketitle

\section{Introduction}

Suppose that $X$ and $Y$ are complex projective algebraic manifolds, both of dimension
$k$, and $f: X \dashrightarrow Y$ is a rational map.  If $I_f$ denotes the
indeterminacy set of $f$, the graph of $f$ is the irreducible variety 
\begin{eqnarray}\label{EQN:GRAPH}
\Gamma_f := \overline{\{(x,y) \in X \times Y \, : \, x \not \in I_f \, \mbox{and} \, y = f(x) \}}.
\end{eqnarray}
One defines the action $f^*: H^*(Y) \rightarrow H^*(X)$ on the singular cohomology of $X$ by considering $f$ as the correspondence $\Gamma_f \subset X \times Y$.
If $\pi_1: X \times Y \rightarrow X$ and $\pi_2: X \times Y \rightarrow Y$ are the canonical projections,
then, for any $\alpha \in H^i(Y)$,
\begin{eqnarray}\label{EQN:PULLBACK}
f^* \alpha := \pi_{1*}([\Gamma_f] \cupprod \pi_2^* \alpha).
\end{eqnarray}
Here, $[\Gamma_f]$ is the fundamental cohomology class of $\Gamma_f$, $\pi_2^*$ is the classical pullback on cohomology as defined for regular maps, and
$\pi_{1*}$ is the pushforward on cohomology, defined by $\pi_{1*} = \PD_X^{-1}
\circ \pi_{1\#} \circ \PD_{X \times Y}$, where $\pi_{1\#}$ denotes the push
forward on homology and $\PD_M: H^*(M) \rightarrow H_{{\rm dim}_\R M-*}(M)$ denotes the
Poincar\'e duality isomorphism on a manifold $M$.  If $f$ is regular (i.e. $I_f =
\emptyset$) then (\ref{EQN:PULLBACK}) coincides with the classical definition
of pullback. 

We will take the coefficients for our cohomology in $\mathbb{C}$, letting
$H^i(X) \equiv H^i(X;\mathbb{C})$.  Since our manifolds are K\"ahler, there is a natural isomorphism
$$\bigoplus_{p+q = i} H^{p,q}(X)~\rightarrow~H^i(X),$$ where the former are the
Dolbeault cohomology groups.  This isomorphism induces a splitting of the
singular cohomology of $X$ into bi-degrees, which one can check is invariant
under the pullback (\ref{EQN:PULLBACK}). 

The most primitive dynamical invariants of any rational selfmap $h: X \dashrightarrow X$ are the {\em dynamical degrees}
\begin{eqnarray}\label{EQN:DEF_DYN_DEG}
\lambda_p(h) := \lim_{n \rightarrow \infty} \left||(h^n)^*: H^{p,p}(X) \rightarrow  H^{p,p}(X) \right||^{1/n},
\end{eqnarray}
which are defined for $1 \leq p \leq k = \dim(X)$.  They were introduced by
Friedland \cite{FRIEDLAND} and by Russakovskii and Shiffman 
\cite{RUSS_SHIFF} and shown to be invariant under birational conjugacy by Dinh
and Sibony \cite{DS_BOUND}.  Note that dynamical degrees were originally defined with the limit in (\ref{EQN:DEF_DYN_DEG})
replaced by a 
${\rm lim sup}$.
However, it was shown in \cite{DS_BOUND} that the limit always exists.

The dynamical degrees of $h$ are tied to the expected ergodic properties of
$h$; see, for example, \cite{GUEDJ_ERGODIC}.  (These expected properties have
been proved when  $\lambda_k(h)$ is maximal \cite{GUEDJ,DNT} or when $\dim(X) =
2$, $\lambda_1(h) > \lambda_2(h)$, and certain minor technical hypotheses are
satisfied \cite{DDG}.) Dynamical degrees are typically hard to compute because
(\ref{EQN:PULLBACK}) does not behave well under composition of maps.  There are
simple examples for which $(h^n)^* \neq (h^*)^n$.  One says that $h$ is {\em
$p$-stable} if $(h^n)^* = (h^*)^n$ on $H^{p,p}(X)$ for every $n \in
\mathbb{Z}^+$.  A nice summary of techniques on how to compute dynamical
degrees appears in \cite{BEDFORD_DYN_DEG}.  Let us note that there are very few
explicit examples \cite{AMERIK,FAVRE_WULCAN,LIN,LIN_WULCAN} in which the
$p$-th dynamical degrees have been computed for $1 < p  < k$.

In order to study the problem of $p$-stability, one typically looks for
criterion on $f: X \dashrightarrow Y$ and $g: Y \dashrightarrow Z$ under which
$(g \circ f)^* = f^* \circ g^*$ (either on all cohomology or for certain degrees).
Such criteria have been given by Fornaess-Sibony \cite{FORNAESS_SIBONY},
Diller-Favre \cite{DILLER_FAVRE}, Bedford-Kim \cite{BEDFORD_KIM,BEDFORD_KIM2}, and
Dinh-Sibony \cite{DINH_SIBONY}.  The proofs of these criteria typically
represent a cohomology class $\alpha \in H^*(Z)$ with a smooth form, pull it
back under $g^*$ as a closed current, and then pull back the resulting current
under $f^*$.  This approach is especially challenging  when $p \geq 2$ since
the pullback of such higher-codimension currents is very delicate.

\vspace{0.1in}
The purpose of this note is to prove these criteria
using intuitive techniques from cohomology and intersection
theory.  This approach is inspired by the techniques used by Amerik in
\cite{AMERIK}.  Our primary motivation is to provide those who are learning these results with
an alternative approach, in the hope that seeing two different proofs makes the results clearer.

Another merit of this approach is that it may be possible to adapt it to
problems about rational maps between projective manifolds defined over fields
$K \neq \mathbb{C}$.    The dynamics of such mappings has gained considerable
interest recently (see, for example, \cite{AMERIK2,JONSSON_WULCAN,SILVERMAN_KAWAGUCHI,TRUONG2,SILVERMAN1} and the references therein) and the analytic techniques involving smooth forms and
positive closed currents from
\cite{DILLER_FAVRE,BEDFORD_KIM,BEDFORD_KIM2,DINH_SIBONY} do not apply in that context.
However, Intersection Theory (our main underlying tool) still applies to 
projective manifolds defined over other fields $K$.

\vspace{0.1in}

Several of the references listed above consider the broader context of
meromorphic maps of compact K\"ahler manifolds.  In order for the techniques
used in this note to be as elementary as possible, 
we will restrict our attention to
rational maps of projective algebraic manifolds.   This allows us to use classical
techniques from intersection theory, such as Fulton's Excess Intersection
Formula, which will be helpful when establishing Lemma \ref{LEM:CUP_PRODUCT}, below.

Let us make the convention that all rational maps are {\em dominant}, meaning
that the image is not contained within a proper subvariety of the
codomain.  To be concise, we will use the term {\em algebraic manifold} to mean complex projective algebraic manifold.
Moreover, since we are primarily motivated by dynamics, all rational
mappings will be between algebraic manifolds of the same dimension.  For any $S \subset X$, we  define
$f(S):= \pi_2(\pi_1^{-1}(S) \cap \Gamma_f)$ and for any $S \subset Y$ we define $f^{-1}(S):= \pi_1(\pi_2^{-1}(S) \cap \Gamma_f)$

For simplicity of exposition, we will ignore the decomposition of cohomology into bidegree wherever possible.

In order to study the composition $g\circ f$ we will need the following diagram:
\begin{eqnarray}\label{MASSIVE_DIAGRAM}
\xymatrix{
& & \Gamma_{g \circ f} \subset X \times Z \ar@/_3pc/[dddll]_{\pr_1}  \ar@/^3pc/[dddrr]^{\pr_2}  \\
& &  X \times Y \times Z \ar[u]^{\rho_2} \ar[dl]_{\rho_1} \ar[dr]^{\rho_3} & & \\
& \Gamma_f \subset  X \times Y \ar[dl]_{\pi_1} \ar[dr]^{\pi_2} & &  \Gamma_g \subset Y \times Z \ar[dl]_{\pi_3} \ar[dr]^{\pi_4} & \\
X \ar @{-->}[rr]^{f} & & Y \ar @{-->}[rr]^{g} & & Z
}
\end{eqnarray}
Central to the entire discussion is the following.
\begin{proposition}\label{PROP:FUNCTOR}
We have
\begin{eqnarray}\label{EQN:COMPOSED_PULLBACK0}
f^* g^* \alpha = \pr_{1*} \left(\rho_{2*}(\rho_1^*[\Gamma_f] \cupprod \rho_3^* [\Gamma_g]) \, \cupprod \pr_2^* \alpha\right).
\end{eqnarray}
In particular, $(g\circ f)^* = f^* \circ g^*$ on all cohomology groups if and only if
\begin{eqnarray*}
[\Gamma_{g\circ f}] = \rho_{2*}(\rho_1^*[\Gamma_f] \cupprod \rho_3^* [\Gamma_g]) \in H^{2k}(X \times Z).
\end{eqnarray*}
\end{proposition}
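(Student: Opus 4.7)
My plan is to unwind the definition (\ref{EQN:PULLBACK}) applied twice and then rearrange the resulting expression until both fundamental classes meet inside $H^{*}(X\times Y\times Z)$ and the class $\alpha$ is factored out over $X\times Z$. Starting from $g^{*}\alpha=\pi_{3*}([\Gamma_{g}]\cupprod\pi_{4}^{*}\alpha)$ and substituting into $f^{*}g^{*}\alpha=\pi_{1*}([\Gamma_{f}]\cupprod\pi_{2}^{*}g^{*}\alpha)$, the goal is to migrate $\pi_{2}^{*}\pi_{3*}$ inside the triple product and then bring the two cup products onto equal footing.

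\textbf{Key identities.} The rearrangement rests on three standard facts about the smooth projections in (\ref{MASSIVE_DIAGRAM}). First, the base change formula $\pi_{2}^{*}\pi_{3*}=\rho_{1*}\rho_{3}^{*}$ holds because $X\times Y\times Z$ is the fiber product of $X\times Y$ and $Y\times Z$ over $Y$; since both $\pi_{2}$ and $\pi_{3}$ are trivial product projections, this reduces to the K\"unneth formula. Second, the projection formula applied to $\rho_{1}$ converts $[\Gamma_{f}]\cupprod\rho_{1*}(\cdots)$ into $\rho_{1*}(\rho_{1}^{*}[\Gamma_{f}]\cupprod\cdots)$, and applied to $\rho_{2}$ lets one pull $\pr_{2}^{*}\alpha$ outside of $\rho_{2*}$. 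Third, the commutativities $\pi_{4}\circ\rho_{3}=\pr_{2}\circ\rho_{2}$ and $\pi_{1}\circ\rho_{1}=\pr_{1}\circ\rho_{2}$ yield $\rho_{3}^{*}\pi_{4}^{*}=\rho_{2}^{*}\pr_{2}^{*}$ and $\pi_{1*}\rho_{1*}=\pr_{1*}\rho_{2*}$. Chaining these manipulations in order---base change, projection formula for $\rho_{1}$, rewriting $\rho_{3}^{*}\pi_{4}^{*}$ via the diagram, collapsing the pushforward composition to $\pr_{1*}\rho_{2*}$, and finally pulling $\pr_{2}^{*}\alpha$ outside via the projection formula for $\rho_{2}$---yields exactly (\ref{EQN:COMPOSED_PULLBACK0}).

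\textbf{The iff statement and the main obstacle.} The ``if'' direction is immediate from (\ref{EQN:COMPOSED_PULLBACK0}) together with $(g\circ f)^{*}\alpha=\pr_{1*}([\Gamma_{g\circ f}]\cupprod\pr_{2}^{*}\alpha)$. For the converse, set $\beta=[\Gamma_{g\circ f}]-\rho_{2*}(\rho_{1}^{*}[\Gamma_{f}]\cupprod\rho_{3}^{*}[\Gamma_{g}])\in H^{2k}(X\times Z)$; the hypothesis forces $\pr_{1*}(\beta\cupprod\pr_{2}^{*}\alpha)=0$ for every $\alpha\in H^{*}(Z)$. Writing $\beta=\sum c_{ab}\,e_{a}\otimes f_{b}$ in a K\"unneth basis of $H^{*}(X)\otimes H^{*}(Z)$, pairing against the Poincar\'e-dual partner of each $f_{b}$, and using the linear independence of the $e_{a}$ in $H^{*}(X)$ forces every $c_{ab}=0$, so $\beta=0$. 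The main delicacy is justifying the base change identity without invoking heavier intersection-theoretic machinery than the paper wishes to use; I would state it as a preliminary lemma whose proof in this product case reduces directly to K\"unneth.
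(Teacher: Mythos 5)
Your proposal is correct and follows essentially the same route as the paper: the identity $\pi_2^*\pi_{3*}=\rho_{1*}\rho_3^*$ justified via the K\"unneth formula is exactly the step the paper labels $\Diamond$, the projection formula is the paper's Push-Pull Lemma, and your converse argument (K\"unneth expansion of the difference class plus the duality pairing on $H^*(Z)$ with field coefficients) matches the paper's treatment of the error term $\mathcal{E}$. The only difference is cosmetic: you run the chain of equalities from $f^*g^*\alpha$ toward the right-hand side, while the paper runs it in the opposite direction.
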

\noindent
This proposition is probably well-known within algebraic geometry, for example a variant of (\ref{EQN:COMPOSED_PULLBACK0}) is proved for the pull back on the Chow Ring in \cite[Sec. 16.1]{FULTON} and \cite[Prop. 9.7]{VOISIN2}, but it seems to be less well-known in rational dynamics. 

Our first application of Proposition \ref{PROP:FUNCTOR} is to prove:
\begin{proposition}\label{PROP:FINITE_FIBERS} Let $f: X \dashrightarrow Y$ and $g: Y \dashrightarrow Z$ be rational maps.  
Suppose that there exits an algebraic manifold $\tilde{X}$ and holomorphic maps $\pr$ and $\tilde{f}$ making the following 
diagram commute (wherever $f \circ \pr$ is defined)
\begin{eqnarray} \label{GENERAL_RESOLUTION}
\xymatrix{\tilde{X} \ar[d]^\pr  \ar[dr]^{\tilde{f}}  & \\
X \ar @{-->}[r]^f & Y }
\end{eqnarray}
with the property that $\tilde{f}^{-1}(x)$ is a finite set for every $y \in Y$.  Then, $(g\circ f)^* = f^* \circ g^*$ on all cohomology groups.
\end{proposition}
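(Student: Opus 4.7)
The plan is to combine Proposition~\ref{PROP:FUNCTOR} with a dimension analysis of the set-theoretic intersection that appears inside (\ref{EQN:COMPOSED_PULLBACK0}).  By Proposition~\ref{PROP:FUNCTOR}, it suffices to establish the cycle-level equality
\[
[\Gamma_{g\circ f}] \;=\; \rho_{2*}\big(\rho_1^*[\Gamma_f] \cupprod \rho_3^*[\Gamma_g]\big) \in H^{2k}(X \times Z).
\]
The first step is to translate the hypothesis into a property of $\Gamma_f$ itself: the holomorphic map $(\pr,\tilde{f}): \tilde{X} \to X \times Y$ has closed image containing the dense subset $\{(x,f(x)) : x \notin I_f\}$ of the irreducible variety $\Gamma_f$, so its image is all of $\Gamma_f$.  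Every point of $\pi_2^{-1}(y)\cap\Gamma_f$ is therefore of the form $(\pr(\tilde{x}),\tilde{f}(\tilde{x}))$ for some $\tilde{x}\in\tilde{f}^{-1}(y)$, and the finiteness hypothesis forces $\pi_2|_{\Gamma_f}: \Gamma_f \to Y$ to be a finite morphism.

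Next I would analyze the set $I := \rho_1^{-1}(\Gamma_f)\cap\rho_3^{-1}(\Gamma_g) = \{(x,y,z) \in X\times Y\times Z : (x,y)\in\Gamma_f,\ (y,z)\in\Gamma_g\}$, along with its ``main'' component $M := \overline{\{(x,f(x),g(f(x))) : x \notin I_f \cup f^{-1}(I_g)\}}$.  The variety $M$ is irreducible of dimension $k$, and a direct tangent-space computation at any generic point of $M$ shows that $\rho_1^{-1}(\Gamma_f) = \Gamma_f \times Z$ and $\rho_3^{-1}(\Gamma_g) = X \times \Gamma_g$ meet transversally there (their tangent spaces intersect in $\{(v, df(v), d(g\circ f)(v)) : v \in T_{x}X\}$, of dimension $k$).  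The core of the argument---which I expect to be the main obstacle---is to show that every other irreducible component $C$ of $I$ satisfies $\dim C \leq k-1$.  Let $q: X\times Y\times Z \to Y$ denote the projection to the middle factor; the fiber of $I$ over $y \in Y$ is $\pi_1(\pi_2^{-1}(y)\cap\Gamma_f) \times \pi_4(\pi_3^{-1}(y)\cap\Gamma_g)$, whose first factor is finite by the finiteness of $\pi_2|_{\Gamma_f}$ and whose second factor is a single point whenever $y \notin I_g$.  If $q(C) = Y$, the generic fiber of $C$ is $0$-dimensional and contained in the generic fiber of $M$, so $C = M$ by irreducibility and dimension.  If $q(C) \subsetneq Y$ and $q(C) \not\subset I_g$, the generic fiber of $C \to q(C)$ is again $0$-dimensional and $\dim C \leq \dim q(C) \leq k-1$.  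Finally, if $q(C) \subset I_g$, then $\dim q(C) \leq \dim I_g \leq k-2$; using $\dim \pi_3^{-1}(I_g) \leq k-1$ together with the generic fiber dimension theorem, the generic fiber dimension of $C \to q(C)$ is at most $k-1-\dim q(C)$, yielding $\dim C \leq k-1$ in this case as well.

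With this estimate, the intersection is proper at the expected dimension $k$, so standard intersection theory gives $\rho_1^*[\Gamma_f]\cupprod\rho_3^*[\Gamma_g] = [M]$ in $H^{4k}(X\times Y\times Z)$: the intersection multiplicity along $M$ is $1$ by the generic transversality, while the sub-$k$-dimensional components of $I$ contribute only to strictly higher cohomological degrees.  Since $\rho_2|_M: M \to \Gamma_{g\circ f}$, $(x,f(x),g(f(x)))\mapsto(x,g(f(x)))$, is birational, $\rho_{2*}[M] = [\Gamma_{g\circ f}]$, and combined with Proposition~\ref{PROP:FUNCTOR} this completes the proof.  The hypothesis on $\tilde{f}$ enters only through the derived finiteness of $\pi_2|_{\Gamma_f}$, which is precisely what prevents components of $I$ lying over $I_g$ from acquiring dimension $\geq k$ and contributing excess intersection that $\Gamma_{g\circ f}$ does not see.
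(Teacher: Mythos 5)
Your proof is correct, but it takes a genuinely different route from the paper's. The paper (via Lemma \ref{LEM:PRINCIPAL_COMP}) reduces the proposition to showing that the set $U$ of (\ref{EQN:U}) is dense in $\rho_1^{-1}(\Gamma_f)\cap\rho_3^{-1}(\Gamma_g)$, and proves that density pointwise: it lifts a point of the intersection to $\tilde{X}$, uses that the finite holomorphic map $\tilde{f}$ is \emph{open} to produce preimages $\tilde{x}_n$ of generic $y_n\rightarrow y_\bullet$, and pushes back down by $\pr$. You instead distill the hypothesis into the finiteness of the fibers of $\pi_2|_{\Gamma_f}$ and then bound the dimension of any putative component $C\neq M$ by fibering over the middle factor $Y$ --- an argument much closer in spirit to the paper's proofs of Propositions \ref{PROP:BK} and \ref{PROP:DS} than to its proof of this statement. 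Your three cases are each sound (in the case $q(C)\subset I_g$ you correctly combine $\dim\left(\pi_3^{-1}(q(C))\cap\Gamma_g\right)\leq k-1$ with the generic-fiber dimension theorem). Two small remarks. First, your closing step can be tightened: since $\rho_1^{-1}(\Gamma_f)$ and $\rho_3^{-1}(\Gamma_g)$ are $2k$-dimensional subvarieties of the smooth $3k$-fold $X\times Y\times Z$, every component of their intersection has dimension at least $k$, so your bound $\dim C\leq k-1$ actually shows that no component other than $M$ exists --- the intersection is irreducible and equals $\overline{U}$, which puts you squarely in cases (i)--(ii) of Lemma \ref{LEM:CUP_PRODUCT} and recovers exactly the density statement the paper proves; the phrase about sub-$k$-dimensional components ``contributing to higher cohomological degrees'' is loose (the cup product lives in a single degree; the correct statement is that such components cannot support a $k$-cycle, but in fact they cannot occur at all). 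Second, your identification of $(\pr,\tilde{f})(\tilde{X})$ with $\Gamma_f$ tacitly assumes $\pr$ is dominant; the paper's own proof makes the same tacit assumption when it asserts that every $(x_\bullet,y_\bullet)\in\Gamma_f$ lifts to $\tilde{X}$, so this is not a defect relative to the paper. What your approach buys is that it avoids the openness/sequence argument entirely and makes visible exactly how the hypothesis prevents excess components over $I_g$; what the paper's approach buys is that it never needs the lower bound on intersection dimension or any case analysis, only the single topological fact that finite holomorphic maps are open.
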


\begin{remark}
In many cases, $\tilde{X}$ will be a blow-up of $X$.  However this is not a hypothesis of Proposition \ref{PROP:FINITE_FIBERS}, which can also be useful in other situations.  Notice also that the condition that $\tilde{f}^{-1}(x)$ is a finite set implies that $\dim(\tilde{X}) = k$.
\end{remark}

After proving Proposition \ref{PROP:FINITE_FIBERS}, we will use Proposition
\ref{PROP:FUNCTOR} to prove the criteria of Diller-Favre, Bedford-Kim, and
Dinh-Sibony stated below.

Historically, the first criterion for functoriality of pullbacks under
compositions  was given by Fornaess and Sibony \cite{FORNAESS_SIBONY} who
proved that if $f: \mathbb{CP}^k \dashrightarrow \mathbb{CP}^k$ and $g:
\mathbb{CP}^k \dashrightarrow \mathbb{CP}^k$ are rational maps then $(g \circ
f)^* = f^* \circ g^*$ on the second cohomology if and only if there is no
hypersurface $H \subset \mathbb{P}^k$ with $f(H \setminus I_f) \subset I_g$.
The proof consists of recognizing that the homogeneous expression obtained when
composing $f$ and $g$ has a common factor of positive degree if and only if
there is a hypersurface $H \subset \mathbb{P}^k$ with $f(H \setminus I_f)
\subset I_g$.  Since this common factor must be removed in order to define $g
\circ f$, the resulting composition has lower degree.   A further study of this
phenomenon and a characterization of the sequences of degrees that may appear
for the iterates of such a map $f$ is given in \cite{BF}.

Since $I_g$ is of codimension at least two, in order that $f(H \setminus I_f) \subset I_g$, $f$ must collapse $H$ to a variety of lower dimension.
The principle that non-functoriality is caused by collapse of a subvariety under $f$ to something of lower dimension that is contained within $I_g$ appears
as a common theme in the following three criteria:

\begin{proposition}\label{PROP:DF} {\rm (Diller-Favre \cite[Prop. 1.13]{DILLER_FAVRE})} Let $X,Y,$ and $Z$ be algebraic manifolds of dimension $2$. Let $f: X \dashrightarrow Y$ and $g: Y \dashrightarrow Z$ be rational maps.  Then
$(g\circ f)^* = f^* \circ g^*$ if and only if there is no curve $C \subset X$
with $f(C \setminus I_f) \subset I_g$.   \end{proposition}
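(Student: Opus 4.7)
My plan is to apply Proposition~\ref{PROP:FUNCTOR}, which reduces the statement to deciding whether the cohomological identity
\[
[\Gamma_{g\circ f}] = \rho_{2*}\bigl(\rho_1^*[\Gamma_f] \cupprod \rho_3^*[\Gamma_g]\bigr) \quad\text{in } H^4(X\times Z)
\]
holds. The right-hand side is the pushforward of a class supported on the set-theoretic intersection $W := (\Gamma_f\times Z)\cap (X\times \Gamma_g) = \Gamma_f \times_Y \Gamma_g \subset X\times Y\times Z$, which has expected dimension $k = 2$. There is always a ``main'' component $\tilde{\Gamma} := \overline{\{(x,f(x),g(f(x))) : x\text{ generic}\}}$ of dimension $2$, and $\rho_2$ maps $\tilde{\Gamma}$ birationally onto $\Gamma_{g\circ f}$.

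The crux is to understand when $W$ has additional top-dimensional components. Such a component can arise only over a point $y^* \in Y$ at which both $\pi_2^{-1}(y^*) \cap \Gamma_f$ and $\pi_3^{-1}(y^*) \cap \Gamma_g$ are positive-dimensional. Because $\dim X = \dim Y = 2$ and $f$ is generically finite, the first condition is met for exactly the finite set of values $q_i := f(C_i)$ as $C_i \subset X$ ranges over the irreducible curves collapsed by $f$; the second is met precisely for $y^* \in I_g$, in which case $\pi_3^{-1}(y^*)\cap \Gamma_g = \{y^*\}\times g(y^*)$ with $g(y^*)$ a curve in $Z$. Thus $W$ carries an additional dimension-$2$ component if and only if some $q_i \in I_g$, which is exactly the negation of the hypothesis.

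If the hypothesis holds, $\tilde{\Gamma}$ is the only top-dimensional component of $W$, and a direct tangent-space computation via the chain rule shows that $\Gamma_f\times Z$ and $X\times \Gamma_g$ meet transversely along an open dense subset of $\tilde\Gamma$. Hence $\rho_1^*[\Gamma_f] \cupprod \rho_3^*[\Gamma_g] = [\tilde{\Gamma}]$, and pushing forward by the birational $\rho_2|_{\tilde\Gamma}$ gives $[\Gamma_{g\circ f}]$, as desired. For the contrapositive of the converse, suppose $f(C\setminus I_f) = \{q\} \subset I_g$ and set $E := g(q)$, a curve in $Z$. Then $C\times\{q\}\times E$ is a second top-dimensional component of $W$ projecting under $\rho_2$ to $C\times E$, so the cup product contributes a positive multiple of $[C\times E] = \pr_X^*[C]\cupprod \pr_Z^*[E]$ to $\rho_{2*}(\rho_1^*[\Gamma_f] \cupprod \rho_3^*[\Gamma_g])$. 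This class is nonzero in $H^4(X\times Z)$ by pairing with a product of K\"ahler forms on $X$ and $Z$. The main obstacle is to verify rigorously that this excess contribution is strictly positive and not canceled by other terms in the intersection class; the cleanest route is via Fulton's Excess Intersection Formula, or alternatively by invoking the general positivity of intersection products of effective algebraic cycles on a smooth projective variety.
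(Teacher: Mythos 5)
Your overall strategy is the same as the paper's: reduce via Proposition~\ref{PROP:FUNCTOR} to comparing $\rho_{2*}(\rho_1^*[\Gamma_f]\cupprod\rho_3^*[\Gamma_g])$ with $[\Gamma_{g\circ f}]$, identify the principal component of $\rho_1^{-1}(\Gamma_f)\cap\rho_3^{-1}(\Gamma_g)$, and show that the collapsed curves landing in $I_g$ are exactly what produce extra components $C\times\{q\}\times E$ pushing forward to the nonzero effective classes $[C\times E]$. Where you genuinely diverge is in the sufficiency direction: the paper proves that the open set $U$ of (\ref{EQN:U}) is dense in the whole intersection by an explicit sequential argument (resolving $f$ by point blow-ups over $I_f$ and using that the fiber component $D$ over $y_\bullet$ must collapse to a point when no curve is sent into $I_g$), whereas you rule out extra components purely by a fiber-dimension count over $Y$. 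Your route is arguably more economical in dimension $2$, but the key assertion --- that an extra component of $\Gamma_f\times_Y\Gamma_g$ can only sit over a single point $y^*$ at which both $\pi_2^{-1}(y^*)\cap\Gamma_f$ and $\pi_3^{-1}(y^*)\cap\Gamma_g$ are positive-dimensional --- is stated rather than proved; it needs the short trichotomy on $\dim$ of the image of the component in $Y$ (image a surface forces the component to be the principal one, image a curve is impossible since the jump loci of both projections are finite on a surface, image a point forces both fibers to be curves).

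That same trichotomy is also what dissolves the ``main obstacle'' you flag at the end. It shows every component of $\rho_1^{-1}(\Gamma_f)\cap\rho_3^{-1}(\Gamma_g)$ has dimension exactly $2$, i.e.\ the intersection is proper, so you are in case (ii) of Lemma~\ref{LEM:CUP_PRODUCT}: every component, principal or not, appears with a \emph{positive} integer coefficient, and Fulton's excess intersection formula is not needed (it only becomes relevant for $k\geq 3$, cf.\ Remark~\ref{REM:NON_PROPER_INTERSECTION}). The pushforward is then $[\Gamma_{g\circ f}]+\sum a_i[C_i\times D_i]$ with $a_i>0$, and since these are effective classes with positive coefficients the sum cannot vanish (pair with K\"ahler forms, as you suggest). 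With those two points made explicit your argument is complete and matches the paper's conclusion.
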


\begin{proposition}\label{PROP:BK}{\rm (Bedford-Kim \cite[Thm. 1.1]{BEDFORD_KIM2})} Let $X,Y,$ and $Z$ be algebraic manifolds of dimension $k$.
Let $f: X \dashrightarrow Y$ and $g: Y \dashrightarrow Z$ be rational maps.   If there is no hypersurface $H$
with $f(H \setminus I_f) \subset I_g$, then $f^* \circ g^* = (g\circ f)^*$ on $H^2(Z)$.
\end{proposition}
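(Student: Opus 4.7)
The plan is to combine Proposition~\ref{PROP:FUNCTOR} with a support analysis of the ``excess'' part of $\Omega := \rho_{2*}(\rho_1^*[\Gamma_f]\cupprod \rho_3^*[\Gamma_g])$. For any $\alpha \in H^2(Z)$, Proposition~\ref{PROP:FUNCTOR} gives
\[
(g\circ f)^*\alpha - f^*g^*\alpha \;=\; \pr_{1*}\bigl(([\Gamma_{g\circ f}] - \Omega)\cupprod \pr_2^*\alpha\bigr),
\]
so it suffices to prove that this class vanishes in $H^2(X)$.

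The key identification is that the set-theoretic intersection $\rho_1^{-1}(\Gamma_f)\cap\rho_3^{-1}(\Gamma_g)$ is the fiber product $\Gamma_f \times_Y \Gamma_g$, whose distinguished component is the closure $\tilde\Gamma_{g\circ f}$ of $\{(x, f(x), g(f(x))) : x \notin I_f\cup f^{-1}(I_g)\}$. This component maps birationally onto $\Gamma_{g\circ f}$ under $\rho_2$ and therefore contributes exactly $[\Gamma_{g\circ f}]$ to $\Omega$. Any other point $(x,y,z)$ of the intersection must have $x \in I_f \cup f^{-1}(I_g)$, for otherwise $y = f(x)$ and $z = g(y)$ are uniquely determined and force $(x,y,z) \in \tilde\Gamma_{g\circ f}$. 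Invoking the forthcoming Lemma~\ref{LEM:CUP_PRODUCT} to also control any excess-intersection correction along $\tilde\Gamma_{g\circ f}$ itself, I would conclude that $\Omega - [\Gamma_{g\circ f}]$ is represented by an algebraic cycle supported in $(I_f\cup f^{-1}(I_g))\times Z$.

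The hypothesis that no hypersurface $H\subset X$ satisfies $f(H\setminus I_f)\subset I_g$ says exactly that $f^{-1}(I_g)\setminus I_f$ contains no hypersurface; together with the standing fact that $I_f$ has codimension at least $2$ in the smooth $X$, this makes $I_f\cup f^{-1}(I_g)$ a closed subvariety of codimension at least $2$ in $X$. Since $\pr_1$ is proper, the class $\pr_{1*}\bigl((\Omega - [\Gamma_{g\circ f}])\cupprod \pr_2^*\alpha\bigr) \in H^2(X)$ is supported on this codimension-$\geq 2$ subvariety, and the purity vanishing $H^i_V(X)=0$ for $i < 2\,{\rm codim}\,V$ (obtained by Thom purity applied on a resolution of $V$) forces it to be zero. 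The main obstacle is the excess-intersection analysis: the intersection $\rho_1^{-1}(\Gamma_f)\cap\rho_3^{-1}(\Gamma_g)$ need not be of the expected dimension, and Lemma~\ref{LEM:CUP_PRODUCT} is precisely the ingredient that keeps the correction supported over the bad locus rather than letting it spill out into a genuine codimension-$1$ contribution on $X$.
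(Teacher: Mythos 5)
Your argument is correct in substance, and its geometric core coincides with the paper's: both proofs come down to showing that every excess component $W_i$ of $\rho_{2*}(\rho_1^*[\Gamma_f]\cupprod\rho_3^*[\Gamma_g]) - [\Gamma_{g\circ f}]$ has $\pr_1(W_i)$ contained in $I_f\cup f^{-1}(I_g)$, which the hypothesis forces to have codimension at least $2$ (the paper phrases this contrapositively: a component whose $X$-projection had dimension $\geq k-1$ would densely meet the locus $x\notin I_f\cup f^{-1}(I_g)$, hence contain points of $U$ and equal the principal component). Where you diverge is the final vanishing step. The paper feeds $\dim\pr_1(W_i)\le k-2$ into Lemma \ref{LEM:COMPONENTS_CONTROLLED_DIMENSION}, a K\"unneth computation giving $\pr_{1*}([W_i]\cupprod\pr_2^*\alpha)=0$ for $\alpha\in H^i(Z)$ with $i<4$; you instead invoke semi-purity of cohomology with supports, $H^i_V(X)=0$ for $i<2\,\mathrm{codim}\,V$. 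Your route does work, but it is heavier than necessary and two points need care. First, you must justify that the proper pushforward $\pr_{1*}$ carries a class supported on $W_i$ to a class supported on $\pr_1(W_i)$; this is cleanest at the level of singular homology, where $\pr_{1\#}$ of $\PD([W_i]\cupprod\pr_2^*\alpha)$ factors through $H_{2k-2}(\pr_1(W_i))$, which vanishes simply because a compact complex space of dimension $\le k-2$ has no homology above real degree $2k-4$ --- at which point you no longer need local cohomology at all and have essentially rederived Lemma \ref{LEM:COMPONENTS_CONTROLLED_DIMENSION}. Second, your parenthetical justification of semi-purity ``by Thom purity applied on a resolution of $V$'' is not literally correct for singular $V$, since $H^*_V(X)$ is not computed on a resolution; the standard argument is by induction on strata or via Borel--Moore homology. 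Finally, your claim that the principal component contributes exactly $[\Gamma_{g\circ f}]$ needs both that $\rho_2$ has topological degree $1$ there and that the intersection multiplicity along it is $1$; the latter comes from the transversality along $U$ in Lemma \ref{LEM:PRINCIPAL_COMP}(i) and deserves to be said explicitly rather than deduced from birationality alone.
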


We will prove a slightly stronger variant of the criterion of Dinh and Sibony.
Let $\tilde{\Sigma}'_f \subset \Gamma_f$ is the set of points such that
\begin{enumerate}
\item[(i)] $\pi_2$ restricted to $\Gamma_f$ is not locally finite at $x$, and
\item[(ii)] $\pi_2((x,y)) \in I_g$ for every $(x,y) \in \tilde{\Sigma}'_f$.
\end{enumerate}
 Let $\Sigma'_f:=\pi_1\left(\tilde{\Sigma}'_f\right)$. 

\begin{proposition} \label{PROP:DS}
{\rm (Variant of Dinh-Sibony \cite[Prop. 5.3.5]{DINH_SIBONY}) } Let $X,Y,$ and $Z$ be algebraic manifolds of dimension $k$.
Let $f: X \dashrightarrow Y$ and $g: Y \dashrightarrow Z$ be rational maps.  If $\dim \Sigma'_f < k-p$, then $(g\circ f)^* = f^* \circ g^*$
on $H^i(Z)$ for $1 \leq i \leq 2p$.
\end{proposition}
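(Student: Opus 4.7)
The plan is to apply Proposition~\ref{PROP:FUNCTOR} to rewrite the discrepancy, then show that the ``excess'' cohomology class
\[
E := \rho_{2*}\bigl(\rho_1^*[\Gamma_f] \cupprod \rho_3^*[\Gamma_g]\bigr) - [\Gamma_{g\circ f}] \in H^{2k}(X\times Z)
\]
is represented by an algebraic cycle supported on $\Sigma'_f \times Z$, and finally to use the hypothesis $\dim \Sigma'_f < k-p$ to force $\pr_{1*}(E \cupprod \pr_2^*\alpha)$ to vanish by a dimension count. From Proposition~\ref{PROP:FUNCTOR} one immediately obtains
\[
f^* g^* \alpha - (g\circ f)^* \alpha = \pr_{1*}\bigl(E\cupprod \pr_2^*\alpha\bigr),
\]
so the proof reduces to showing the right-hand side vanishes for every $\alpha \in H^i(Z)$ with $i \le 2p$.

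For the geometric analysis of $E$, consider the scheme-theoretic fiber product $\Gamma := (\Gamma_f \times Z) \cap (X \times \Gamma_g) = \Gamma_f \times_Y \Gamma_g \subset X \times Y \times Z$. There is a distinguished ``main'' irreducible component $\Gamma^{\mathrm{main}}$ of complex dimension $k$ on which $\rho_2$ restricts to a birational map onto $\Gamma_{g\circ f}$; consequently $\rho_{2*}[\Gamma^{\mathrm{main}}] = [\Gamma_{g\circ f}]$. Interpreting the cohomology class $\rho_1^*[\Gamma_f] \cupprod \rho_3^*[\Gamma_g]$ via intersection theory---using Fulton's Excess Intersection Formula when the intersection fails to have the expected dimension---this class equals $[\Gamma^{\mathrm{main}}]$ plus corrections coming from either additional $k$-dimensional irreducible components of $\Gamma$ or Fulton-type contributions supported on the higher-dimensional components. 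A local analysis of the fiber product shows that such corrections can only occur at points $(x,y,z)$ where \emph{both} the fiber of $\pi_3|_{\Gamma_g}$ over $y$ has positive dimension (forcing $y \in I_g$) \emph{and} the fiber of $\pi_2|_{\Gamma_f}$ at $(x,y)$ has positive dimension---exactly the definition of $\tilde{\Sigma}'_f$. Pushing forward by $\rho_2$, one concludes that $E$ is represented by an algebraic cycle $\sum_j n_j [V_j]$ with each $V_j \subset X \times Z$ of complex dimension $k$ and $\pr_1(V_j) \subseteq \Sigma'_f$.

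The final step is a dimension count. Since $\dim_{\mathbb C}\pr_1(V_j) \le \dim_{\mathbb C}\Sigma'_f \le k-p-1$, the closed set $\Sigma'_f \subset X$ has real dimension at most $2k-2p-2$, so its Borel--Moore homology $H^{\mathrm{BM}}_j(\Sigma'_f)$ vanishes for every $j \ge 2k-2p-1$. The cup product $E \cupprod \pr_2^*\alpha$ is supported on $V = \bigcup_j V_j \subset \Sigma'_f \times Z$, so its $\pr_1$-pushforward is supported on $\pr_1(V) \subseteq \Sigma'_f$; via Poincar\'e duality this pushforward corresponds to a class in $H^{\mathrm{BM}}_{2k-i}(\Sigma'_f)$, which vanishes for $i \le 2p$ because then $2k-i \ge 2k-2p > 2k-2p-2$.

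The main obstacle is the second paragraph: rigorously identifying the non-main contributions to $\rho_1^*[\Gamma_f]\cupprod\rho_3^*[\Gamma_g]$ as an effective cycle geometrically supported over $\tilde{\Sigma}'_f$. The fiber product over $I_g$ can carry extra components, non-reduced structure, and excess dimension simultaneously, and the cleanest way to pass from the cohomological cup product to an explicit cycle representative is through Fulton's excess intersection formalism. The reduction via Proposition~\ref{PROP:FUNCTOR} and the final dimension count are then essentially formal.
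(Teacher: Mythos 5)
Your architecture coincides with the paper's: reduce via Proposition~\ref{PROP:FUNCTOR} to showing that the excess class is carried by cycles lying over $\Sigma'_f$, then kill it by a dimension count. (Your Borel--Moore count at the end is a perfectly acceptable substitute for the paper's Lemma~\ref{LEM:COMPONENTS_CONTROLLED_DIMENSION}, which reaches the same vanishing via the K\"unneth decomposition.) The genuine gap is exactly the step you yourself flag as ``the main obstacle'': the assertion that all non-principal contributions to $\rho_1^*[\Gamma_f]\cupprod\rho_3^*[\Gamma_g]$ are supported over $\tilde{\Sigma}'_f$ is the entire content of the proposition, and ``a local analysis of the fiber product shows'' is not an argument. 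As written, the proposal is a correct plan, not a proof.

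Here is how the paper closes that step, and where your hypothesis (i) on $\tilde{\Sigma}'_f$ actually gets used. By Lemma~\ref{LEM:CUP_PRODUCT}(iii), every class appearing in the cup product is carried by a $k$-dimensional subvariety of an irreducible component of the geometric intersection $\rho_1^{-1}(\Gamma_f)\cap\rho_3^{-1}(\Gamma_g)$, so it suffices to show that every component other than the principal one $V=\overline{U}$ is contained in $\rho_1^{-1}(\tilde{\Sigma}'_f)$; equivalently, that every intersection point $(x_\bullet,y_\bullet,z_\bullet)$ with $(x_\bullet,y_\bullet)\notin\tilde{\Sigma}'_f$ lies in $\overline{U}$. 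If $y_\bullet\notin I_g$ this is immediate. If $y_\bullet\in I_g$ but $\pi_2|_{\Gamma_f}$ is locally finite at $(x_\bullet,y_\bullet)$, then $\pi_2|_{\Gamma_f}$ is \emph{open} there by the Weierstrass Preparation Theorem; choosing $(y_n,z_n)\to(y_\bullet,z_\bullet)$ in the graph of $g|_{Y\setminus(I_g\cup f(I_f))}$ (dense in $\Gamma_g$ by Lemma~\ref{LEM:GRAPH_MINUS_VARIETY}) and lifting by openness gives $(x_n,y_n)\in\Gamma_f$ with $x_n\to x_\bullet$ and $x_n\notin I_f$, hence points of $U$ converging to $(x_\bullet,y_\bullet,z_\bullet)$. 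Without this openness argument your support claim is unsubstantiated. One smaller inaccuracy: the correction terms need not form an \emph{effective} cycle, since the excess-intersection coefficients can be negative (see the remark following Lemma~\ref{LEM:CUP_PRODUCT}); fortunately only the support, not positivity, enters your final dimension count.
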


\begin{remark}The distinction between this criterion and the one from \cite[Prop. 5.3.5]{DINH_SIBONY} is that we impose the extra condition (ii) on  $\Sigma_f$,
allowing for higher dimensional varieties to be collapsed by $f$, so long as they don't map into $I_g$.
\end{remark}

\begin{remark}
Proposition \ref{PROP:FINITE_FIBERS}, the sufficiency condition in Proposition \ref{PROP:DF}, and Proposition
\ref{PROP:BK} can all be obtained as corollaries to Proposition \ref{PROP:DS}.  However,
we'll present them separately since they're of independent interest and their direct proofs are simpler.
\end{remark}

In \S \ref{SEC:BACKGROUND} we provide a brief background with needed tools
from cohomology and intersection theory.  In \S \ref{SEC:SINGULAR_GRAPHS} we
discuss some further properties of the graph $\Gamma_f$ and we show that
definition (\ref{EQN:PULLBACK}) of $f^*$ is equivalent with some of the other
standard versions appearing in the literature.  We  prove Propositions
\ref{PROP:FUNCTOR} in \S \ref{SEC:FUNCTOR}.  In \S \ref{SEC:CRITERIA}  we
prove Propositions \ref{PROP:FINITE_FIBERS}-\ref{PROP:DS}.  This paper is
concluded with \S \ref{SEC:CONCLUSION} in which we provide a cautionary
example, presenting a rational map $f: X \dashrightarrow X$ of a three
dimensional manifold $X$ that is not $2$-stable but has the property that
$\left(f|_{X \setminus I_f}\right)^{-1}(x)$ is finite for every $x \in X$.
This example illustrates that to study $p$ stability for $1 < p < k$, one must
consider collapsing behavior lying within the indeterminate set.

\section{Background from cohomology and intersection theory}\label{SEC:BACKGROUND}

Suppose $f: M \rightarrow N$ is a continuous map between compact manifolds of dimensions $m$ and $n$, respectively.
Given $\alpha \in H^i(M)$, we define $f_*: H^i(M) \rightarrow H^{n-m+i}(N)$ by
\begin{eqnarray}
f_* \alpha := {\rm PD}_N^{-1}(f_\# ({\rm PD}_M \alpha)),
\end{eqnarray}
where $f_\#: H_*(M) \rightarrow H_*(N)$ is the push forward on homology. 

We will make extensive use of the following formula.

\begin{lemma}[Push-Pull Formula]\label{LEM:PUSH_PULL}
Suppose $M$ and $N$ are manifolds and $f: M \rightarrow N$ is continuous.  Then, for any $\alpha \in H^i(N)$ and any $\beta \in H^j(M)$ we have
\begin{eqnarray*}
f_*(f^*(\alpha) \cupprod \beta) = \alpha \cupprod f_*(\beta) \in H^{n-m+i+j}(N).
\end{eqnarray*}
\end{lemma}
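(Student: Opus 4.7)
The plan is to reduce the Push-Pull Formula to the classical projection formula for the cap product, using Poincar\'e duality as the bridge between cup-product pushforwards and cap-product pushforwards.

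First I would unwind the definition $f_* = \PD_N^{-1} \circ f_\# \circ \PD_M$ and recall that Poincar\'e duality on an oriented closed manifold $M$ is realized by cap product with the fundamental class: $\PD_M(\gamma) = \gamma \capprod [M]$. The other two ingredients I need are standard (and hold for any continuous map between compact oriented manifolds): the associativity identity
\begin{eqnarray*}
(\gamma \cupprod \delta) \capprod x = \gamma \capprod (\delta \capprod x),
\end{eqnarray*}
relating cup and cap products, and the cap-product projection formula
\begin{eqnarray*}
f_\#(f^*(\alpha) \capprod x) = \alpha \capprod f_\#(x), \qquad \alpha \in H^*(N),\ x \in H_*(M).
\end{eqnarray*}
Both are standard facts from singular (co)homology; see for instance Hatcher, Ch.~3.

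The main computation then proceeds by a short chain of rewrites. Starting from $f_*(f^*\alpha \cupprod \beta)$, I apply $\PD_M$ and use the cup/cap associativity to write
\begin{eqnarray*}
\PD_M(f^*\alpha \cupprod \beta) = (f^*\alpha \cupprod \beta) \capprod [M] = f^*\alpha \capprod (\beta \capprod [M]) = f^*\alpha \capprod \PD_M\beta.
\end{eqnarray*}
Pushing forward by $f_\#$ and invoking the cap-product projection formula gives $\alpha \capprod f_\#(\PD_M\beta)$, which by definition of $f_*$ equals $\alpha \capprod \PD_N(f_*\beta)$. Applying cup/cap associativity once more in the opposite direction yields $\PD_N(\alpha \cupprod f_*\beta)$, and finally $\PD_N^{-1}$ produces $\alpha \cupprod f_*\beta$, as desired.

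There is no serious obstacle: the only thing to be careful about is that the cap-product projection formula and the cup/cap associativity are the \emph{inputs}, not things to be reproved here. Should one worry about orientation conventions or grading, the pattern of degrees is consistent ($f^*\alpha \cupprod \beta$ lives in $H^{i+j}(M)$, $f_*$ raises degree by $n-m$, and $\alpha \cupprod f_*\beta$ lies in $H^{i + j + n - m}(N)$), so no sign or degree bookkeeping causes trouble.
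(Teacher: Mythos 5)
Your proposal is correct and matches the paper's proof exactly: the paper also derives the formula from the three inputs you name (Poincar\'e duality as cap product with the fundamental class, the cup/cap associativity $(\eta \cupprod \phi) \capprod \gamma = \eta \capprod (\phi \capprod \gamma)$, and the cap-product projection formula on homology), citing Bredon for these facts. Your version merely writes out the chain of rewrites that the paper leaves implicit.
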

\noindent
Note that when $f$ is holomorphic, this is sometimes also called the ``projection formula''.

\begin{proof}
This is a simple consequence of the following three facts 
\begin{itemize}
\item[(i)] Push-Pull formula on homology:  
If $f: M \rightarrow N$ is continuous, $\eta \in H^*(N),$ and $\gamma \in H_*(M)$, then
\begin{eqnarray*}
f_\#(f^*(\eta) \capprod \gamma) = \eta \capprod f_\# \gamma,
\end{eqnarray*}
\item[(ii)] $\PD_M(\alpha)$ is defined by $\alpha \capprod \{M\}$, where $\{M\}$ is the fundamental homology class of $M$, and
\item[(iii)] for any $\eta, \phi \in H^*(M)$ and $\gamma \in H_*(M)$, then $(\eta \cupprod \phi) \capprod \gamma = \eta \capprod (\phi \capprod \gamma)$.
\end{itemize}
See \cite[Ch. VI, Thm. 5.1 and Cor. 9.3]{BREDON}.
\end{proof}



We will need a little bit of information about the K\"unneth formuli on cohomology and homology.
Recall that our (co)homology is taken with coefficients in the field $\mathbb{C}$.
Let $$\kappa_i: \bigoplus_{a+b=i} H^a(M) \otimes H^b(N) \rightarrow H^i(M\times N)$$ and \\
$$K_i: \bigoplus_{a+b=i} H_{a}(M) \otimes H_{b}(N) \rightarrow H_{i}(M \times N)$$ be the K\"unneth isomorphisms.
Recall  that $\kappa_i(\gamma \otimes \eta) = \pi_1^* \gamma \cupprod \pi_2^* \eta$.
Suppose $M$ and $N$ are manifolds.

\begin{lemma}\label{LEM:KUNNETH}
The following diagram commutes:
{\small
\begin{align}\label{KUNNETH_DIAGRAM}
\xymatrix{
\bigoplus_{a+b=i} H^a(M) \otimes H^b(N) \ar[d]^{\kappa_i} \ar[rrrr]^{(-1)^{mb} \PD_M \otimes \PD_N} & & & & \bigoplus_{a+b=i} H_{m-a}(M) \otimes H_{n-b}(N) \ar[d]^{K_{m+n-i}} \\
H^{i}(M \times N) \ar[rrrr]^{\PD_{X\times Y}} & & & & H_{m+n-i}(M\times N).
}
\end{align}
}
\end{lemma}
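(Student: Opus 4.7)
The plan is to unwind both sides of the claimed equality on decomposable tensors $\gamma \otimes \eta$ with $\gamma \in H^a(M)$ and $\eta \in H^b(N)$, and then reduce the statement to one standard compatibility between cup, cap, and cross products. Going across the top and down the right of the diagram produces
\begin{equation*}
K_{m+n-i}\bigl((-1)^{mb}(\gamma \capprod \{M\}) \otimes (\eta \capprod \{N\})\bigr)
= (-1)^{mb}\,(\gamma \capprod \{M\}) \times (\eta \capprod \{N\}),
\end{equation*}
where $\times$ denotes the homology cross product. Going down the left and then across the bottom, and using $\kappa_i(\gamma \otimes \eta) = \pi_1^*\gamma \cupprod \pi_2^*\eta$ together with the definition of Poincar\'e duality as cap product with the fundamental class, produces
\begin{equation*}
(\pi_1^*\gamma \cupprod \pi_2^*\eta) \capprod \{M \times N\}.
\end{equation*}
So everything reduces to identifying these two expressions.

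The first key input is that the fundamental class of a product manifold is the homology cross product of the fundamental classes, i.e. $\{M \times N\} = \{M\} \times \{N\}$; this is standard and can be cited from Bredon. The second key input is the cap--cross product compatibility
\begin{equation*}
(\pi_1^*\gamma \cupprod \pi_2^*\eta) \capprod (\sigma \times \tau) = (-1)^{|\eta|\,|\sigma|}\,(\gamma \capprod \sigma) \times (\eta \capprod \tau)
\end{equation*}
for $\sigma \in H_*(M)$ and $\tau \in H_*(N)$, which is exactly the formula on page of Bredon's Chapter VI relating cross products with cup and cap products. Applying it with $\sigma = \{M\}$, $\tau = \{N\}$, so that $|\sigma| = m$ and $|\eta| = b$, yields the sign $(-1)^{mb}$ and the right-hand side exactly.

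The only subtle point is to make sure the sign convention used in the statement of the lemma matches the one in Bredon's formulation; different sources put the sign on different sides. Once this is pinned down (and, if necessary, the sign on $\PD_M \otimes \PD_N$ in the top arrow is reconciled with the sign in the cap-cross product identity), the proof is a direct computation on elementary tensors, which extends by bilinearity of $\kappa_i$, $K_{m+n-i}$, $\PD_M$, $\PD_N$, and $\PD_{M \times N}$ to all of $\bigoplus_{a+b=i} H^a(M) \otimes H^b(N)$. I expect this sign-bookkeeping to be the only real obstacle; the algebraic content is entirely contained in the two standard identities above.
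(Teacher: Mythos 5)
Your proof is correct and is essentially identical to the paper's: both reduce the diagram to the cup--cap--cross compatibility $\kappa(\alpha \otimes \beta) \capprod K(c \otimes d) = (-1)^{\deg(\beta)\deg(c)} K((\alpha \capprod c) \otimes (\beta \capprod d))$ (Bredon, Ch.~VI, Thm.~5.4) together with $\{M \times N\} = K(\{M\} \otimes \{N\})$. Your sign computation with $|\sigma| = m$ and $|\eta| = b$ already yields exactly the $(-1)^{mb}$ in the top arrow, so the convention matches and no further reconciliation is needed.
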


\begin{proof}
According to \cite[Ch. VI, Thm. 5.4]{BREDON}, if $\alpha \in H^*(X), \beta \in H^*(Y), c \in H_*(X),$ and $d \in H_*(Y)$, then
\begin{eqnarray*}
\kappa(\alpha \otimes \beta) \capprod K(c \otimes d) = (-1)^{\deg(\beta) \deg(c)}K((\alpha \capprod c) \otimes (\beta \capprod d)).
\end{eqnarray*}
The result follows, since $\PD_{M \times N}$ is obtained by taking the cap product with $\{M \times N\}$ \\ $ = K(\{M\} \otimes \{N\})$.
  \end{proof}

\begin{lemma}\label{LEM:KUNNETH_PROJECTION}
Let $M$ and $N$ be connected manifolds of dimensions $m$ and $n$, respectively and
let $\pi: M \times N \rightarrow M$ be projection onto the first coordinate.   
Suppose $\nu \in H^i(M \times N)$ satisfies
\begin{eqnarray*}
\kappa^{-1}(\nu) =  \sum_{a=1}^i \sum_{l=1}^{l_j} \gamma_{i-a,l} \otimes \eta_{a,l}
\end{eqnarray*}
with $\gamma_{i-a,l} \in H^{i-a}(M)$, $\eta_{a,l} \in H^{a}(N)$, and with the normalization that
each  $\eta_{n,l} \in H^n(N)$ is the fundamental class $[x]$ of a point $x \in N$.   Then
\begin{eqnarray*}
\pi_* \nu = (-1)^{mn} \sum_{l=1}^{l_n}\gamma_{i-n,l}.
\end{eqnarray*}
\end{lemma}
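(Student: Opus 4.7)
The plan is to convert the statement to homology via Poincar\'e duality, where the projection $\pi$ acts in a particularly transparent way on K\"unneth decompositions. Since $\pi_* = \PD_M^{-1} \circ \pi_\# \circ \PD_{M\times N}$ by definition, it suffices to compute $\pi_\# \PD_{M\times N}(\nu) \in H_{m+n-i}(M)$ and then apply $\PD_M^{-1}$.

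First I would use Lemma \ref{LEM:KUNNETH} to translate the given cohomological decomposition of $\nu$ into a homological one:
$$\PD_{M\times N}(\nu) = \sum_{a,l} (-1)^{ma}\, \PD_M(\gamma_{i-a,l}) \times \PD_N(\eta_{a,l}),$$
where $\times = K(\cdot \otimes \cdot)$ denotes the homological cross product. The next ingredient is the elementary identity
$$\pi_\#(c \times d) = \epsilon(d)\, c \quad \text{for } c \in H_*(M),\ d \in H_*(N),$$
where $\epsilon: H_*(N) \to \mathbb{C}$ is the augmentation, i.e.\ $\epsilon$ sends the homology class of a point to $1$ and vanishes on classes of positive degree. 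Geometrically, projecting a product cycle $c \times d$ collapses the $N$-factor and produces an image of dimension strictly less than $\dim c + \dim d$ unless $d \in H_0(N)$; on $H_0$ the map $\pi_\#$ simply records the weighted point-count.

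Applying this term by term, only those summands with $\dim \PD_N(\eta_{a,l}) = n-a = 0$ survive, i.e.\ those with $a = n$. By hypothesis each such $\eta_{n,l}$ is the fundamental cohomology class of a point, so $\PD_N(\eta_{n,l})$ is the class of a point in $H_0(N)$ and $\epsilon(\PD_N(\eta_{n,l})) = 1$. Collecting the surviving terms gives
$$\pi_\# \PD_{M\times N}(\nu) = (-1)^{mn} \sum_{l=1}^{l_n} \PD_M(\gamma_{i-n,l}),$$
and applying $\PD_M^{-1}$ yields the claimed formula. The only delicate aspect is the sign bookkeeping from Lemma \ref{LEM:KUNNETH}: because only $a = n$ survives, the K\"unneth sign $(-1)^{ma}$ specializes automatically to $(-1)^{mn}$, matching the conclusion. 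The identity $\pi_\#(c \times d) = \epsilon(d)\, c$ is the main (and only) non-cosmetic step; it can be proved by representing $c \times d$ by a product of singular cycles and computing directly, or by invoking naturality of the K\"unneth isomorphism with respect to the collapse $N \to \{*\}$.
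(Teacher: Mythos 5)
Your proposal is correct and follows essentially the same route as the paper: pass to homology via $\pi_* = \PD_M^{-1}\circ\pi_\#\circ\PD_{M\times N}$, apply Lemma \ref{LEM:KUNNETH} to convert the cohomological K\"unneth decomposition into a homological one, and then use the fact that $\pi_\#$ kills every cross-product term whose $N$-factor has positive degree and acts by the augmentation on $H_0(N)$ --- which is exactly the identity the paper invokes and likewise justifies by naturality of the K\"unneth isomorphism. Your sign bookkeeping ($(-1)^{ma}$ specializing to $(-1)^{mn}$ on the surviving $a=n$ terms) also matches the paper's statement.
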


\begin{proof}
This follows from Lemma \ref{LEM:KUNNETH} and the fact that the push forward $\pr_\#$ on homology satisfies that
\begin{eqnarray*}
\pr_\#\left(K\left(\sum_{a=1}^i \sum_{l=1}^{l_a} g_{i-a,l} \otimes e_{a,l}\right)\right)
= \sum_{l=1}^{l_0} g_{i,l},
\end{eqnarray*}
if each $g_{i-a,l} \in H_{i-a}(M)$, each $e_{a,l} \in H_a(N)$, and each
$e_{0,i} = \{x\}$ is the fundamental homology class of a point.  This follows
easily from the fact that the K\"unneth Isomorphism is natural with respect to
induced maps.
\end{proof}

\begin{remark}
In our applications, $M$ and $N$ will be complex manifolds.  Since they have even real-dimension, the signs will
disappear from Lemmas \ref{LEM:KUNNETH} and \ref{LEM:KUNNETH_PROJECTION}.
\end{remark}

\vspace{0.1in}

Let $X$ be an algebraic manifold of (complex) dimension $k$ and let $V \subset X$ be a
subvariety of dimension $k-i$.  It is well known that $V$ generates a cohomology
class $[V] \in H^{2i}(X)$; see, for example, \cite{GH,VOISIN1}.  If $V'
\subset X$ is another subvariety of dimension $k-j$, we will need information
relating $[V] \cupprod [V']$ to $V \cap V'$.  This is the subject of Intersection
Theory \cite{FULTON_INTRO,FULTON,VOISIN2}.    One says that $V$ and $V'$ are
{\em transverse at generic points} of $V \cap V'$ if there is a dense set of $V
\cap V'$ on which $V$ and $V'$ are both smooth and 
intersect transversally.  The information we need is encapsulated in:
\begin{lemma}\label{LEM:CUP_PRODUCT}
Let $V$ and $V'$ be subvarieties of $X$ of dimensions $k-i$ and $k-j$.  Then,
$[V] \cupprod [V']$ is represented as a linear combination of fundamental cohomology classes of $k-i-j$-dimensional subvarieties of $V \cap V'$.  
More specifically:
\begin{enumerate}
\item[(i)] If $V$ and $V'$ are transverse at generic points of $V \cap V'$, then $[V] \cupprod [V'] = [V \cap V']$.
\item[(ii)] More generally, if each of the components $W_1,\ldots,W_{m_0}$ of $V \cap V'$ has the correct dimension of $k-i-j$, 
then 
\begin{eqnarray*}
[V] \cupprod [V'] = \sum_{m=1}^{m_0} a_m [W_m],
\end{eqnarray*}
where each $a_m \in \mathbb{Z}^+$ is an {\em intersection number} satisfying that $a_m = 1$ if and only if $V$ and $V'$ are transverse at generic points
of $W_m$.
\item[(iii)] Most generally, if some of the components $W_1,\ldots,W_{m_0}$ of $V \cap V'$ are of dimension $> k-i-j$,
then, 
\begin{eqnarray*}
[V] \cupprod [V'] = \sum_{m=1}^{m_0} \sum_{n=1}^{n_m} a_{m,n} [W_{m,n}],
\end{eqnarray*}
where each $W_{m,n} \subset W_m$ is a subvariety of $W_m$ of dimension $k-i-j$ and each $a_{m,n} \in \mathbb{Z}$.  For each $W_m$
of the correct dimension $k-i-j$ the inner sum reduces to be $a_m [W_m]$, where $a_m$ is given as in {\rm (ii)}.
\end{enumerate}
\end{lemma}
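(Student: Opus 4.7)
The plan is to transfer the entire statement from cohomology to the Chow ring via the cycle class map and then invoke standard intersection theory. Since $X$ is smooth projective, the cycle class map $\mathrm{cl}: A^*(X) \to H^{2*}(X)$ is a graded ring homomorphism sending the class of a codimension $i$ subvariety to its fundamental cohomology class and sending the refined intersection product to the cup product (see \cite[Ch. 19]{FULTON}). It therefore suffices to analyze $[V]\cdot[V']$ in $A^*(X)$, and by construction of the refined intersection product this class is supported on $V\cap V'$, so the overall assertion that $[V]\cupprod[V']$ is supported on $(k-i-j)$-dimensional subvarieties of $V\cap V'$ is automatic.

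For part (i), I would reduce to the local analytic model in which $V$ and $V'$ are complementary coordinate subspaces of an affine chart meeting transversally at a generic smooth point of a component of $V\cap V'$; the refined intersection product in that model gives the fundamental class of the coordinate intersection with multiplicity one, from which $[V]\cupprod[V']=[V\cap V']$ follows by naturality. For part (ii), when every component $W_m$ of $V\cap V'$ has the correct codimension $i+j$, the intersection is proper and one applies Serre's Tor formula, or equivalently Fulton's refined intersection \cite[\S 8.2]{FULTON}, to obtain positive integer multiplicities $a_m$; the equivalence $a_m = 1 \Leftrightarrow$ generic transverse intersection along $W_m$ is again a local question and follows from the same model used in (i).

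The main obstacle is part (iii), where some of the components of $V\cap V'$ have excess dimension. The key tool is Fulton's Excess Intersection Formula \cite[Thm. 6.3 and \S 8.1]{FULTON}, which I would apply to the diagonal regular embedding $\Delta: X \hookrightarrow X\times X$ using the identity $[V]\cdot[V'] = \Delta^![V\times V']$. For each component $W_m$ of $(V\times V')\cap \Delta(X)$ of excess dimension $e_m > 0$, the formula expresses the contribution of $W_m$ to $[V]\cdot[V']$ as a pushforward involving the top Chern class of the excess normal bundle on $W_m$. This Chern class lives in $A^{e_m}(W_m)$ and can be written as a signed integer combination of codimension-$e_m$ subvarieties $W_{m,n} \subset W_m$, so that $W_{m,n}$ has the required dimension $k-i-j$ in $X$; summing over $m$ yields the formula with integer coefficients $a_{m,n}$ which may have either sign. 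When $W_m$ already has the expected dimension $k-i-j$, the excess normal bundle has rank zero, its top Chern class is $1$, and the formula specializes to the one in part (ii), which I would note as an internal consistency check.
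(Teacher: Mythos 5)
Your proposal is correct and follows essentially the same route as the paper: both reduce the statement to the Chow ring via the cycle class map, handle the proper-intersection cases (i) and (ii) with Fulton's intersection multiplicities, and invoke Fulton's excess intersection formula for the excess-dimensional components in (iii). Your sketch is in fact slightly more explicit than the paper's (which only outlines the reduction and cites Fulton), notably in spelling out the diagonal embedding and the role of the excess normal bundle.
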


Note that in case (iii), the coefficients $a_{m,n}$ can be negative, for example the self-intersection of the exceptional divisor resulting
from a blow-up of $\mathbb{CP}^2$ is represented by a single point on the exceptional divisor with coefficient $-1$.

\vspace{0.1in}
Rather than presenting a proof of Lemma \ref{LEM:CUP_PRODUCT}, we will mention how to obtain it from the
corresponding properties in the Chow Ring $CH^*(X)$, which are proved in \cite{FULTON_INTRO,FULTON,VOISIN2}.  
For each $0
\leq i \leq k$, the chow group $CH^i(X)$ is the collection of finite formal
sums of $k-i$-dimensional irreducible subvarieties taken with integer
coefficients, up to an equivalence relation known as {\em rational
equivalence}.  We won't need the detailed definition of rational equivalence, however let us
denote the rational equivalence class of an irreducible subvariety $V$
by $(V)$.  

One obtains the Chow Ring $CH^*(X) = \bigoplus_{i=0}^k CH^i(X)$ by
defining an intersection product 
\begin{eqnarray*}
\bullet: CH^i(X) \times CH^j(X) \rightarrow CH^{i+j}(X).
\end{eqnarray*}
If $V$ and $V'$ intersect properly, with dimension $k-i-j$, then each
component of the intersection is assigned an intersection multiplicity in a
relatively simple way, see \cite[Sec. 8.2]{FULTON}.  (Note that using the
uniqueness described in \cite[Eg. 11.4.1]{FULTON}, one can show that this
intersection multiplicity is consistent with the more intuitive approach of
\cite[Sec. 12.3]{CHIRKA}.)  This intersection multiplicity is a positive integer that equals  $1$ if and only if $V$ and $V'$ are generically transverse along the component.

If the intersection has a component whose dimension
is larger than $k-i-j$, there are two approaches: \begin{itemize} \item[(i)]
moving one of the subvarieties $V$ to a rationally equivalent one $\tilde{V}$
in such a way that $\tilde{V} \cap V'$ has the correct dimension, via Chow's
moving lemma (see \cite[Lem. 9.22]{VOISIN2} or \cite[Sec. 11.4]{FULTON}), or
\item[(ii)] or  Fulton's {\em excess intersection formula}, which represents the
intersection product as a linear combination of subvarieties lying within $V
\cap V'$ (see \cite[Sec. 9.2]{VOISIN2} or \cite[Sec. 6.3]{FULTON}).
\end{itemize} 
In order to guarantee the property that the cup product is represented by a sum of fundamental classes of subvarieties of
$V \cap V'$, we appeal to the latter.

Lemma \ref{LEM:CUP_PRODUCT} then follows from the fact that there is a ring
homomorphism ${\rm cl}: CH^*(X) \rightarrow H^{2*}(X)$ with the property
that for any irreducible $V \subset X$, ${\rm cl}((V)) = [V]$. See, for
example, \cite[Ch. 19]{FULTON} or \cite[Lem. 9.18 and Prop. 9.20]{VOISIN2}.

\begin{remark}
In many of our applications, we will only need properties (i) and (ii) which are relatively simple.  We will only use
property (iii) to show that the cup product is given by subvarieties of the geometric intersection $V \cap V'$.  We won't use any details of how the coefficients $a_{m,n}$ in Part (iii) of Lemma \ref{LEM:CUP_PRODUCT} are actually computed.
\end{remark}

\begin{lemma}\label{LEM:PUSH_FORWARD_VARIETIES}
Suppose that $f: X \rightarrow Y$ is a proper holomorphic map between algebraic manifolds.  For any irreducible subvariety $V \subset X$
we have 
\begin{itemize}
\item[(i)] if $\dim(f(V)) = \dim(V)$, then $f_*([V]) = {\rm deg}_{\rm top}(f|_{V}) [f(V)]$, 
where ${\rm deg}_{\rm top}(f|_{V})$ is the number of preimages under $f|_V$ of a generic point from $f(V)$.
\item[(ii)] Otherwise, $f_*([V]) = 0$.
\end{itemize}
\end{lemma}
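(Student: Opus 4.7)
The plan is to unwind the definition of $f_*$ via Poincar\'e duality and reduce to a direct statement about pushforwards of fundamental homology classes. By definition of the fundamental cohomology class, $\PD_X([V]) = \iota_\#\{V\}$, where $\iota: V \hookrightarrow X$ is the inclusion and $\{V\} \in H_{2\dim_\mathbb{C} V}(V)$ is the fundamental homology class of $V$; the latter exists because $V$ is a pure-dimensional complex analytic variety, canonically oriented on its smooth locus, whose singular locus has real codimension at least $2$. Factoring $f\circ \iota$ through the image as $V \rightarrow f(V) \hookrightarrow Y$ (which is legitimate because $f$ is proper, so $f(V)$ is closed in $Y$) and using functoriality of the pushforward on homology, I get
\begin{eqnarray*}
f_*([V]) \;=\; \PD_Y^{-1}\bigl(j_\# (f|_V)_\# \{V\}\bigr),
\end{eqnarray*}
where $j: f(V) \hookrightarrow Y$ denotes inclusion and $f|_V: V \rightarrow f(V)$ denotes the restriction.

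For case (ii), when $\dim f(V) < \dim V$, the group $H_{2\dim_\mathbb{C} V}(f(V))$ vanishes because $f(V)$ has real dimension strictly less than $2\dim_\mathbb{C} V$. Consequently $(f|_V)_\#\{V\}=0$, so $f_*([V]) = 0$.

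For case (i), set $d := {\rm deg}_{\rm top}(f|_V)$ and let $B \subset f(V)$ be the union of the singular locus of $f(V)$ together with the image under $f|_V$ of the critical set of $f|_V$ on the smooth locus of $V$. Then $B$ is a proper subvariety of $f(V)$, and the restriction $f|_V: V\setminus f^{-1}(B) \rightarrow f(V)\setminus B$ is a $d$-sheeted unramified covering of complex manifolds. The pushforward of a fundamental class under a $d$-sheeted unramified covering is $d$ times the fundamental class of the base, and removing a set of real codimension at least $2$ does not affect top-degree singular homology; together these give $(f|_V)_\#\{V\} = d \cdot \{f(V)\}$ in $H_{2\dim_\mathbb{C} V}(f(V))$. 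Pushing forward along $j$ and applying $\PD_Y^{-1}$ yields $f_*([V]) = d\cdot[f(V)]$, as claimed. The only real delicacy is justifying the fundamental-class statements for possibly singular analytic varieties, but this is standard once one restricts attention to the smooth locus and uses that its complement has real codimension at least two.
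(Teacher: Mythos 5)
Your argument is correct in outline and takes a genuinely different route from the paper: the paper disposes of this lemma by citing Fulton's Intersection Theory (Lemma 19.1.2, that the cycle class map commutes with proper pushforward, together with the remark in Section 1.4 identifying $\deg(V/f(V))$ with the topological degree), whereas you give a self-contained topological proof by unwinding $f_* = \PD_Y^{-1}\circ f_\#\circ \PD_X$, factoring through $f(V)$, killing case (ii) by a dimension count on $H_{2\dim_{\mathbb C}V}(f(V))$, and computing case (i) via the generic covering. This buys independence from the Chow-ring machinery at the cost of having to handle singular varieties' fundamental classes by hand --- which is exactly where Fulton's own proof resorts to Borel--Moore homology.

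That said, one step is false as literally written: ``removing a set of real codimension at least $2$ does not affect top-degree singular homology'' fails already for $S^2$ minus a point, where $H_2$ drops from $\mathbb{Z}$ to $0$. The excision you need is valid for Borel--Moore (locally finite) homology, where the restriction $H_{2n}^{BM}(W)\rightarrow H_{2n}^{BM}(W\setminus B)$ is an isomorphism when $\dim_{\mathbb R}B\le 2n-2$, and proper pushforward under the $d$-sheeted covering multiplies the Borel--Moore fundamental class by $d$; alternatively, use that $H_{2n}(f(V))\cong\mathbb{Z}\cdot\{f(V)\}$ for the irreducible compact variety $f(V)$ and evaluate the coefficient of $(f|_V)_\#\{V\}$ as a local degree in $H_{2n}\bigl(f(V),f(V)\setminus\{y\}\bigr)$ at a generic smooth point $y$, which counts the $d$ preimages. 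Either repair is standard, so this is a fixable gap rather than a wrong approach. A smaller point: your set $B$ should also contain $f(V_{\rm sing})$ and the images of the positive-dimensional fibers of $f|_V$ (both proper subvarieties of $f(V)$ since $f|_V$ is generically finite), as otherwise $V\setminus f^{-1}(B)\rightarrow f(V)\setminus B$ need not be an unramified covering of manifolds.
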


\begin{proof}
This is essentially \cite[Lem. 19.1.2]{FULTON} combined the remark in 
Section 1.4 of \cite{FULTON} that $\deg(V/f(V))$ is equal to the topological degree ${\rm deg}_{\rm top}(f|_{V})$
of $f|_V:~V~\rightarrow~f(V)$.
\end{proof}

\begin{lemma}\label{LEM:COMPONENTS_CONTROLLED_DIMENSION}
Let $X$ and $Z$ be $k$-dimensional algebraic manifolds and let $W$ be a $k$-dimensional subvariety of $X \times Z$.
We have
\begin{eqnarray*}
\pr_{1*}([W] \cupprod \pr_2^* \alpha) = 0
\end{eqnarray*}
if either
\begin{enumerate}
\item[(i)]  $\dim(\pr_1(W)) \leq k-p$ and $\alpha \in H^{i}(Z)$ for some $i < 2p$, or
\item[(ii)]$\dim(\pr_2(W)) \leq p$ and $\alpha \in H^{i}(Z)$ for some $i > 2p$.
\end{enumerate}
\end{lemma}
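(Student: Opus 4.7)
The plan is to reduce the left-hand side to a pushforward--pullback along a single smooth resolution of $W$, after which the vanishing in each case becomes an elementary dimension count.

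Pick a resolution of singularities $\tau: \tilde W \to W$ and set $\mu := \iota \circ \tau: \tilde W \to X \times Z$, where $\iota: W \hookrightarrow X \times Z$ is the inclusion. Write $p_j := \pr_j \circ \mu$ for $j = 1,2$. Since $\tau$ is a proper birational map of compact complex $k$-dimensional spaces, $\tau_\# \{\tilde W\} = \{W\}$ in $H_{2k}$, and applying $\PD_{X\times Z}^{-1}$ to $\mu_\#\{\tilde W\} = \iota_\#\{W\} = \PD_{X\times Z}[W]$ yields $[W] = \mu_* 1_{\tilde W}$. The push-pull formula (Lemma \ref{LEM:PUSH_PULL}) together with functoriality of the topological pushforward then gives
\begin{equation*}
\pr_{1*}\bigl([W] \cupprod \pr_2^*\alpha\bigr) \;=\; \pr_{1*}\mu_*\bigl(\mu^* \pr_2^*\alpha\bigr) \;=\; p_{1*}\bigl(p_2^*\alpha\bigr).
\end{equation*}

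For case (i), the map $p_1$ factors through the compact subvariety $V := \pr_1(W) \subset X$, which has real dimension at most $2(k-p)$. For any $\alpha \in H^i(Z)$ with $i < 2p$, the class $\PD_{\tilde W}(p_2^*\alpha) \in H_{2k - i}(\tilde W)$ is pushed by $p_{1\#}$ through $H_{2k-i}(V)$; but $2k - i > 2(k-p)$ exceeds the real dimension of $V$, so this homology group vanishes. Hence $p_{1*}(p_2^*\alpha) = 0$. Case (ii) is the mirror statement: $p_2$ factors through $U := \pr_2(W) \subset Z$, which has real dimension at most $2p$, so $p_2^*\alpha$ factors through the restriction to $H^i(U)$. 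For $i > 2p$ we have $H^i(U) = 0$, whence $p_2^*\alpha = 0$ and the conclusion follows.

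The one mildly delicate point is the identification $[W] = \mu_* 1_{\tilde W}$ when $W$ is singular; this is precisely where a resolution is invoked, and is handled by the short Poincar\'e-duality computation above using that $\tau$ has topological degree one. Beyond that, both vanishings rest only on the standard fact that a compact complex variety has vanishing singular (co)homology above its real dimension, so no genuine obstacle arises.
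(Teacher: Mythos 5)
Your proof is correct, and it takes a genuinely different route from the paper's. The paper never leaves the product $X\times Z$: it observes that $\{W\}$ lies in the image of $H_{2k}(\pr_1(W)\times Z)$, expands $\{W\}$ via the K\"unneth isomorphism, transports this to cohomology using the compatibility of K\"unneth with Poincar\'e duality (Lemma \ref{LEM:KUNNETH}), and then notes that every K\"unneth summand of $[W]\cupprod \pr_2^*\alpha$ has $Z$-degree strictly below $2k$, so Lemma \ref{LEM:KUNNETH_PROJECTION} kills the pushforward. You instead resolve $W$, use $[W]=\mu_*1_{\tilde W}$ together with the push--pull formula and functoriality of $(\cdot)_*$ to rewrite the whole expression as $p_{1*}(p_2^*\alpha)$ on the smooth compact $\tilde W$, and then conclude by the vanishing of singular (co)homology of the compact varieties $\pr_1(W)$ and $\pr_2(W)$ above their real dimensions. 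Both arguments ultimately rest on that same dimension-vanishing fact; yours avoids the K\"unneth bookkeeping and its signs entirely (and makes case (ii) immediate, since $p_2^*\alpha=0$ outright), at the cost of invoking resolution of singularities and the fact that a degree-one proper map carries fundamental class to fundamental class --- both of which are legitimate and consistent with tools the paper already uses (Lemmas \ref{LEM:RESOLUTION2} and \ref{LEM:PUSH_FORWARD_VARIETIES}). The paper's route has the minor advantage of setting up exactly the K\"unneth machinery that is reused in the proofs of Proposition \ref{PROP:FUNCTOR} and Lemma \ref{LEM:KUNNETH_PROJECTION}, whereas yours is self-contained and arguably more elementary in spirit.
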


\begin{proof}
Suppose $\dim(\pr_1(W)) \leq k-p$.
The fundamental homology class $\{W\}$ is in the image of $\iota_\#$, where $\iota: \pr_1(W) \times Z \hookrightarrow X \times
 Z$ is the inclusion.  Therefore,
\begin{eqnarray*}
K^{-1}(\{W\}) =  \sum_{a=1}^{2k-2p} \sum_{l=1}^{l_a} g_{a,l} \otimes e_{2k-a,l},
\end{eqnarray*}
with each $g_{a,l} \in H_a(X)$ and each $e_{2k-a,l} \in H_{2k-a}(Z)$.  Applying Lemma \ref{LEM:KUNNETH}, we have
\begin{eqnarray*}
\kappa^{-1}([W]) = \sum_{a=1}^{2k-2p} \sum_{l=1}^{l_a} \PD_X^{-1}(g_{a,l}) \otimes \PD_Y^{-1}(e_{2k-a,l}) 
= \sum_{a=1}^{2k-2p} \sum_{l=1}^{l_a} \gamma_{2k-a,l} \otimes \eta_{a,l}.
\end{eqnarray*}
where each $\gamma_{2k-a,l} \in H^{2k-a}(X)$ and $\eta_{a,l} \in H^{a}(Z)$.  Thus for any $\alpha \in H^{i}(Z)$ we have,
\begin{eqnarray*}
[W] \cupprod \pr_2^* \alpha  = \sum_{a=1}^{2k-2p} \sum_{l=1}^{l_a} \pr_1^* (\gamma_{2k-a,l}) \cupprod \pr_2^* (\eta_{a,l} \cupprod \alpha) \\
= \kappa^{-1}\left(\sum_{a=1}^{2k-2p} \sum_{l=1}^{l_a} \gamma_{2k-a,l} \otimes (\eta_{a,l} \cupprod \alpha)\right).
\end{eqnarray*}
Since each term in the second factor has degree $2k-2p+i < 2k$, Lemma \ref{LEM:KUNNETH_PROJECTION} gives that
\begin{eqnarray*}
\pr_{2*}([W] \cupprod \pr_2^* \alpha) = 0.
\end{eqnarray*}
The proof of (ii) is essentially the same.
\end{proof}

\section{Alternative definitions for $f^*$ and remarks about $\Gamma_f$}
\label{SEC:SINGULAR_GRAPHS}

In this section, we'll show that two common alternative definitions for $f^*
\alpha$ are consistent with (\ref{EQN:PULLBACK}).    In Example \ref{EG:HENON}, we'll see that the graph $\Gamma_f$
may be singular at points whose first coordinate is in $I_f$.  For this reason, these alternative definitions for $f^* \alpha$
are more commonly used in actual computations.

\begin{lemma}\label{LEM:RESOLUTION1}
Suppose $\tilde{X}$ is a $k$-dimensional algebraic manifold and that $\pr: \tilde{X} \rightarrow X$
and  $\tilde{f} : \tilde{X}  \rightarrow Y$ are holomorphic maps making the Diagram (\ref{GENERAL_RESOLUTION})
commute.  Then, $f^* \alpha = \pr_* \left(\tilde{f}^* \alpha \right)$.
\end{lemma}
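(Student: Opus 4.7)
The plan is to pass the data $(\pr,\tilde f)$ through the graph $\Gamma_f\subset X\times Y$ and then unwind $\pr_*(\tilde f^*\alpha)$ using the Push-Pull Formula, so that the right-hand side of (\ref{EQN:PULLBACK}) appears directly. Concretely, I would package $\pr$ and $\tilde f$ into a single proper holomorphic map $\Phi:\tilde X\to X\times Y$ defined by $\Phi(\tilde x):=(\pr(\tilde x),\tilde f(\tilde x))$, so that $\pr=\pi_1\circ\Phi$ and $\tilde f=\pi_2\circ\Phi$. Since Diagram (\ref{GENERAL_RESOLUTION}) commutes on the dense open set $U:=\pr^{-1}(X\setminus I_f)$, $\Phi(U)$ lies in $\Gamma_f$, and closedness of $\Gamma_f$ forces $\Phi(\tilde X)\subseteq\Gamma_f$. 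In the intended resolution setting, where $\pr$ is birational, $\Phi:\tilde X\to\Gamma_f$ is birational as well; Lemma \ref{LEM:PUSH_FORWARD_VARIETIES} then gives
\begin{equation*}
\Phi_*(1)\;=\;\Phi_*([\tilde X])\;=\;[\Gamma_f]\;\in\;H^{2k}(X\times Y).
\end{equation*}

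The main computation is then a short chain that uses functoriality of $(\cdot)^*$ and $(\cdot)_*$ for regular maps together with the Push-Pull Formula (Lemma \ref{LEM:PUSH_PULL}):
\begin{align*}
\pr_*(\tilde f^*\alpha)
&=\pi_{1*}\bigl(\Phi_*(\Phi^*\pi_2^*\alpha)\bigr)\\
&=\pi_{1*}\bigl(\pi_2^*\alpha\cupprod\Phi_*(1)\bigr)\\
&=\pi_{1*}\bigl([\Gamma_f]\cupprod\pi_2^*\alpha\bigr)\\
&=f^*\alpha.
\end{align*}
Here the second equality is Lemma \ref{LEM:PUSH_PULL} applied to $\Phi$ with the class $\beta=1\in H^0(\tilde X)$, the third substitutes $\Phi_*(1)=[\Gamma_f]$ from the previous paragraph, and the last equality is the definition (\ref{EQN:PULLBACK}) of $f^*$.

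The only genuinely non-routine point is the identification $\Phi_*(1)=[\Gamma_f]$, which requires $\Phi$ to map $\tilde X$ onto $\Gamma_f$ with topological degree one. This is automatic whenever $(\tilde X,\pr)$ is a resolution of $X$ in the usual sense (for instance, a composition of blow-ups separating the indeterminacy of $f$), since then $\pr$ is birational and the fibers of $\Phi$ over a generic point $(x,f(x))\in\Gamma_f$ with $x\notin I_f$ are precisely the singleton fibers $\pr^{-1}(x)$. If $\pr$ were merely a surjective holomorphic map of degree $d>1$, the same argument would yield $\pr_*(\tilde f^*\alpha)=d\cdot f^*\alpha$, so it is the birationality of $\pr$ that carries the actual content of the lemma; I would therefore make this implicit hypothesis explicit at the start of the proof.
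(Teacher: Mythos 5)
Your proof is correct and is essentially the paper's own argument: the paper's one-line proof cites the identity $(\pr \times \id)_*\left[\Gamma_{\tilde{f}}\right] = [\Gamma_f]$ together with the Push-Pull formula, and your map $\Phi$ is exactly $\pr \times \id$ precomposed with the graph embedding of $\tilde{f}$, so your identity $\Phi_*(1) = [\Gamma_f]$ and the subsequent unwinding coincide with the paper's computation. Your closing observation that the conclusion genuinely requires $\Phi$ to have topological degree one onto $\Gamma_f$ (e.g.\ that $\pr$ be birational, as it is for a blow-up) is a worthwhile addition, since the paper's proof relies on the same hypothesis without stating it and the statement fails for a degree-$d$ cover.
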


\noindent
Usually, $\pr: \tilde{X} \rightarrow X$ will be a blow-up, but Lemma \ref{LEM:RESOLUTION1} holds in greater generality.

\begin{proof}
This follows from the fact that $(\pr \times \id)_*\left[\Gamma_{\tilde{f}}\right] = [\Gamma_
f]$ and the Push-Pull formula.
\end{proof}

\begin{lemma}\label{LEM:RESOLUTION2}
Suppose that $\tilde{\Gamma_f}$ is a resolution of the singularities in $\Gamma_f$ and $\tilde{\pi_1}$ and $\tilde{\pi_2}$ are
the lifts of $\pi_1 |_{\Gamma_f}$ and $\pi_2 |_{\Gamma_f}$ to $\tilde{\Gamma_f}$.  Then, $f^* \alpha = \tilde{\pi}_{1*}(\tilde{\pi_2}^* \alpha)$.
\end{lemma}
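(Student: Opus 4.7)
The plan is to reduce the statement to the definition in (\ref{EQN:PULLBACK}) via a single application of the push-pull formula, using the fact that a resolution of singularities is birational (degree one). Let $j : \Gamma_f \hookrightarrow X \times Y$ denote the inclusion and $r : \tilde{\Gamma}_f \to \Gamma_f$ the resolution, and set $F := j \circ r : \tilde{\Gamma}_f \to X \times Y$. This is a proper holomorphic map from a $k$-dimensional algebraic manifold (real dimension $2k$) to the $2k$-dimensional manifold $X \times Y$ (real dimension $4k$), and by construction $\tilde{\pi}_i = \pi_i \circ F$ for $i = 1, 2$, so $\tilde{\pi}_i^{*} = F^{*} \circ \pi_i^{*}$ and $\tilde{\pi}_{1*} = \pi_{1*} \circ F_{*}$.

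The key step is to show that
\begin{eqnarray*}
F_{*}(1_{\tilde{\Gamma}_f}) = [\Gamma_f] \in H^{2k}(X \times Y),
\end{eqnarray*}
where $1_{\tilde{\Gamma}_f} \in H^{0}(\tilde{\Gamma}_f)$ is the unit class. Unwinding the definition of $F_{*}$ via Poincar\'e duality, this class equals $\PD_{X \times Y}^{-1}(F_\#\{\tilde{\Gamma}_f\})$, and since $r$ is a birational morphism onto $\Gamma_f$ (hence has topological degree one), we have $F_\#\{\tilde{\Gamma}_f\} = j_\# r_\# \{\tilde{\Gamma}_f\} = j_\#\{\Gamma_f\}$, whose Poincar\'e dual is by definition $[\Gamma_f]$. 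This is the natural singular analogue of Lemma \ref{LEM:PUSH_FORWARD_VARIETIES}; the main obstacle, such as it is, lies here in handling the fundamental class of the possibly singular variety $\Gamma_f$ and in invoking that a resolution pushes its fundamental homology class forward to that of $\Gamma_f$.

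Once this identity is in hand, the push-pull formula (Lemma \ref{LEM:PUSH_PULL}) applied to $F$ gives
\begin{eqnarray*}
[\Gamma_f] \cupprod \pi_2^{*} \alpha = F_{*}(1) \cupprod \pi_2^{*} \alpha = F_{*}(F^{*} \pi_2^{*} \alpha) = F_{*}(\tilde{\pi}_2^{*} \alpha).
\end{eqnarray*}
Pushing forward under $\pi_1$ and using functoriality of the pushforward on cohomology, we conclude
\begin{eqnarray*}
f^{*} \alpha = \pi_{1*}\left([\Gamma_f] \cupprod \pi_2^{*} \alpha\right) = \pi_{1*} F_{*}(\tilde{\pi}_2^{*} \alpha) = (\pi_1 \circ F)_{*}(\tilde{\pi}_2^{*} \alpha) = \tilde{\pi}_{1*}(\tilde{\pi}_2^{*} \alpha),
\end{eqnarray*}
which is the desired formula. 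The proof is essentially the same as the one for Lemma \ref{LEM:RESOLUTION1}, with $\tilde{\Gamma}_f$ playing the role of $\tilde{X}$ and $F$ providing the holomorphic replacement for the singular inclusion of $\Gamma_f$ into $X \times Y$.
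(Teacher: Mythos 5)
Your argument is correct and is essentially the paper's: the paper proves Lemma~\ref{LEM:RESOLUTION2} by observing that it is a restatement of Lemma~\ref{LEM:RESOLUTION1} (take $\tilde{X} = \tilde{\Gamma}_f$, $\pr = \tilde{\pi}_1$, $\tilde{f} = \tilde{\pi}_2$ in diagram~(\ref{GENERAL_RESOLUTION})), and the push--pull computation you write out, hinging on $F_{*}(1_{\tilde{\Gamma}_f}) = [\Gamma_f]$, is exactly the argument behind that lemma, with your identity playing the role of $(\pr \times \id)_*[\Gamma_{\tilde{f}}] = [\Gamma_f]$. Nothing further is needed.
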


\begin{proof}
This is a restatement of Lemma \ref{LEM:RESOLUTION1}.
\end{proof}

The following lemma will be helpful later.

\begin{lemma}\label{LEM:GRAPH_MINUS_VARIETY}
Let $X$ and $Y$ be algebraic manifolds of dimension $k$ and
let $f: X \dashrightarrow Y$ be a rational map.  If $V \subset X$ be a proper subvariety of $X$,
then
\begin{eqnarray*}
\Gamma_f = \overline{\{(x,y) \in X \times Y \, : \, x \not \in V \cup I_f \, \, \mbox{and} \, \, y = f(x) \}}.
\end{eqnarray*}
\end{lemma}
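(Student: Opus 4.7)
The plan is to show that the candidate set
\[
U' := \{(x,y) \in X \times Y \, : \, x \not\in V \cup I_f \text{ and } y = f(x)\}
\]
is (Zariski) dense in the set
\[
U := \{(x,y) \in X \times Y \, : \, x \not\in I_f \text{ and } y = f(x)\},
\]
whose closure is $\Gamma_f$ by the definition (\ref{EQN:GRAPH}). Since $U' \subset U \subset \Gamma_f$, the inclusion $\overline{U'} \subset \Gamma_f$ is free, and density of $U'$ in $U$ will yield the reverse inclusion $\Gamma_f = \overline{U} \subset \overline{U'}$.

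To establish density, I would use that $\pi_1 \vert_U : U \to X \setminus I_f$ is a biholomorphism (its inverse is $x \mapsto (x, f(x))$, which is well defined and holomorphic on $X \setminus I_f$). Under this biholomorphism, $U'$ corresponds to $(X \setminus I_f) \setminus V$. So it suffices to show that $V \cap (X \setminus I_f)$ is a proper Zariski closed subset of the irreducible complex manifold $X \setminus I_f$.

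Irreducibility of $X \setminus I_f$ follows because $X$ is a connected (hence irreducible) algebraic manifold and $I_f$ is a proper Zariski closed subset, so $X \setminus I_f$ is a nonempty Zariski open subset of an irreducible variety. Properness of $V \cap (X \setminus I_f)$ in $X \setminus I_f$ is the only mildly delicate point: if $X \setminus I_f \subset V$, then since $X \setminus I_f$ is dense in $X$ and $V$ is closed we would get $V = X$, contradicting that $V$ is a proper subvariety. Hence $V \cap (X \setminus I_f)$ is a proper analytic subset of the irreducible $X \setminus I_f$, therefore nowhere dense, and $U'$ is dense in $U$.

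The main (and essentially only) obstacle is the verification that $V \cap (X \setminus I_f)$ remains a proper subset after restricting to $X \setminus I_f$; everything else is formal from the definition of $\Gamma_f$ as a closure and the elementary fact that a proper analytic subset of an irreducible complex manifold is nowhere dense.
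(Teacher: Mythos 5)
Your proof is correct and follows essentially the same route as the paper's: both arguments come down to the fact that removing a proper subvariety from an irreducible variety leaves a dense subset. The only cosmetic difference is that the paper checks this upstairs on $\Gamma_f$ (using that $\pi_1 : \Gamma_f \to X$ is dominant, so $\pi_1^{-1}(V)$ is a proper subvariety of the irreducible $\Gamma_f$), whereas you transport the question down to $X \setminus I_f$ via the graph parametrization $x \mapsto (x, f(x))$.
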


\begin{proof}
Since $\Gamma_f$ is defined by (\ref{EQN:GRAPH}), it suffices to show that $\Gamma_f \setminus \pi_1^{-1}(V)$
is dense in $\Gamma_f$.  This follows since
$\pi_1: \Gamma_f \rightarrow X$ is dominant, giving that $\pi_1^{-1}(V)$ is a proper subvariety of $\Gamma_f$.
\end{proof}

\begin{example}\label{EG:HENON}
Both Eric Bedford and the one of the anonymous referees have pointed out to us several
complications arising when working directly with the graph $\Gamma_f$.  The
following is an expanded version of Example 1 from \cite[\S 5]{BEDFORD_KOREA}.

The quadratic H\'enon map
$h_{a,c} : \mathbb{C}^2 \rightarrow \mathbb{C}^2$, given by
\begin{eqnarray}\label{EQN:HENON}
h_{a,c}(x_1,x_2) = (x_1^2+c - ax_2,x_1),
\end{eqnarray}
extends as a birational map of $\P^2$, which is expressed in homogeneous coordinates as
\begin{eqnarray}\label{EQN:HENON_HOMOG}
h_{a,c}([X_1:X_2:X_3]) = [X_1^2 + c X_3^2 -a X_2 X_3: X_1 X_3: X_3^2].
\end{eqnarray}
The extension has indeterminacy $I_h = \{[0:1:0]\}$.  One can check that $I_h$ blows-up under $h$ to $L_\infty := \{X_3 = 0\}$
and that $h(L_\infty \setminus I_h) = [1:0:0]$.  In particular, points of $\Gamma_h$ satisfy that $X_3 = 0 \Leftrightarrow Y_3 = 0$.

In $\mathbb{C}^2 \times \mathbb{C}^2$, the graph of $h$ is given by 
\begin{eqnarray}\label{EQN:HENON_AFF}
y_1 = x_1^2+c - ax_2 \qquad \mbox{and} \qquad y_2 = x_1.
\end{eqnarray}
It is natural to expect that the graph $\Gamma_h$ of $h: \P^2 \dashrightarrow \P^2$
is obtained by substituting \\ $x_1=\frac{X_1}{X_3}, x_2 = \frac{X_2}{X_3}, y_1 = \frac{Y_1}{Y_3}$, and $y_2 = \frac{Y_2}{Y_3}$ and then clearing denominators.  
One obtains 
\begin{eqnarray}\label{EQN:BADHENON}
Y_1 X_3^2 &=& X_1^2 Y_3 + c X_3^2 Y_3 - a X_2 X_3 Y_3 \qquad \mbox{and}\\
Y_2 X_3 &=& X_1 Y_3, \nonumber
\end{eqnarray}
which describe some subset of $\P^2 \times \P^2$.  
However, if one sets $X_3 = Y_3 = 0$, both equations become $0=0$, so that
$[X_1:X_2:0] \times [Y_1:Y_2:0]$ satisfies (\ref{EQN:BADHENON}) for any $X_1, X_2, Y_1$, and $Y_2$.  Thus, (\ref{EQN:BADHENON}) does not capture the
fact that $h(L_\infty \setminus I_h) = [1:0:0]$.

One can try adding further equations that are consistent with
(\ref{EQN:HENON_AFF}) on $\mathbb{C}^2 \times \mathbb{C}^2$.    When $X_3$ and $Y_3$ are not
$0$, it follows from the second equation in (\ref{EQN:BADHENON}) that
$\frac{X_1}{X_3} = \frac{Y_2}{Y_3}$, which implies $\frac{1}{X_3} = \frac{Y_2}{Y_3 X_1}$.
Substituting $x_1 = \frac{X_1 Y_2}{Y_3 X_1} = \frac{Y_2}{Y_3}, x_2 = \frac{X_2 Y_2}{Y_3 X_1}$ and $y_1 = \frac{Y_1}{Y_3}$ into first equation from (\ref{EQN:HENON_AFF}), clearing denominators, and dividing by a common factor of $X_1$ adds a third equation to the system:
\begin{eqnarray}\label{EQN:GOODHENON}
Y_1 X_3^2 &=& X_1^2 Y_3 + c X_3^2 Y_3 - a X_2 X_3 Y_3, \nonumber \\
Y_2 X_3 &=& X_1 Y_3, \qquad \mbox{and} \\
Y_1 Y_3 X_1 &=& X_1 Y_2^2 + c Y_3^2 X_1 - a X_2 Y_2 Y_3. \nonumber
\end{eqnarray}
When one substitutes $X_3 = Y_3 = 0$ into (\ref{EQN:GOODHENON}), the third equation becomes $0 = X_1 Y_2^2$, which expresses
that $h(L_\infty \setminus I_h) = [1:0:0]$.  However, $[0:1:0] \times [Y_1:0:Y_3]$ satisfies (\ref{EQN:GOODHENON}) for any $Y_1$ and $Y_3$.  Thus (\ref{EQN:GOODHENON}) does not imply that $X_3 = 0 \Leftrightarrow Y_3 = 0$, which is required for points of~$\Gamma_h$. 

If one computes $h^{-1} : \mathbb{C}^2 \rightarrow \mathbb{C}^2$, one of the equations is $x_2 = \frac{1}{a}(y_2^2 +c - y_1)$.  Converting this equation to homogeneous
coordinates and adding it to our system, we obtain
\begin{eqnarray}\label{EQN:GREATHENON}
Y_1 X_3^2 &=& X_1^2 Y_3 + c X_3^2 Y_3 - a X_2 X_3 Y_3, \nonumber \\
Y_2 X_3 &=& X_1 Y_3, \\
Y_1 Y_3 X_1 &=& X_1 Y_2^2 + c Y_3^2 X_1 - a X_2 Y_2 Y_3, \qquad  \mbox{and} \nonumber \\
a X_2 Y_3^2 &=& X_3 Y_2^2 + c X_3 Y_3^2 - Y_1 Y_3 X_3. \nonumber
\end{eqnarray}
These four equations imply that $X_3 = 0 \Leftrightarrow Y_3 = 0$.  When $X_3 = Y_3 = 0$, the third
equation becomes $0 = X_1 Y_2^2$, which describes $\Gamma_h \cap (L_\infty \times L_\infty)$ and when $X_3 \neq 0$ and $Y_3 \neq 0$
the first two equations describe $\Gamma_h \cap (\mathbb{C}^2 \times \mathbb{C}^2)$.  Therefore, (\ref{EQN:GREATHENON}) describes
$\Gamma_h \subset \P^2 \times \P^2$.

One might wonder whether $\Gamma_h$ can be described with fewer equations.
This is related to the notion on {\em complete intersection}; see, for example, 
\cite[Exercise I.2.17]{HART}.  If we let $J$ be
the ideal in $\mathbb{C}[X_1,X_2,X_3,Y_1,Y_2,Y_3]$ defined by
(\ref{EQN:GREATHENON}), one can compute $I(\Gamma_h) = \sqrt{J}$ using the
computer algebra package Macaulay2 \cite{MAC}.  One finds that $I(\Gamma_h)$ is
generated by two equations

\begin{eqnarray}\label{EQN:COMPLETE}
X_3 Y_2 - X_1 Y_3 &=& 0, \qquad \mbox{and} \\
X_3 Y_1 - X_1 Y_2 + a X_2 Y_3 - c X_3 Y_3 &=& 0. \nonumber
\end{eqnarray}
Thus, $\Gamma_h$ is a complete intersection.  In particular, these two equations describe $\Gamma_h$ in all of $\P^2 \times \P^2$.

If we express (\ref{EQN:COMPLETE}) in the local coordinates $z_1 = X_1/X_2, z_2 = X_3/X_2, w_1 = Y_2/Y_1,$ and $w_2 = Y_3/Y_1$ centered at $[0:1:0] \times [1:0:0]$, we find
\begin{eqnarray*}
z_2 w_1 - z_1 w_2 &=& 0 \qquad  \mbox{and} \\
z_2 - z_1 w_1 + a w_2 - c z_2 w_2 &=& 0.
\end{eqnarray*}
Since the lowest order terms of the first equation are quadratic, when one restricts $\Gamma_h$ to any plane through $(z_1,z_2,w_1,w_2) = (0,0,0,0)$, the result will have local multiplicity $\geq 2$.  This implies that $\Gamma_h$ has local multiplicity $\geq 2$
at $[0:1:0] \times [1:0:0]$ and hence that $\Gamma_h$ is singular there.
\end{example}

\section{Proof of the composition formula}\label{SEC:FUNCTOR}

The proof of (\ref{EQN:COMPOSED_PULLBACK0}) below is cribbed from Voisin's textbook \cite[Prop. 9.17]{VOISIN2}.

\begin{proof}{Proof of Proposition \ref{PROP:FUNCTOR}:}
For any $\alpha \in H^*(Z)$ we have
\begin{eqnarray*}
\pr_{1*}(\rho_{2*}(\rho_1^*[\Gamma_f] \cupprod \rho_3^* [\Gamma_g]) \cupprod \pr_2^* \alpha) &\stackrel{\rm PP}{=}& \pr_{1*}(\rho_{2*}(\rho_1^*[\Gamma_f] \cupprod \rho_3^* [\Gamma_g] \cupprod \rho_2^* \pr_2^* \alpha))  \\
= \pr_{1*}(\rho_{2*}(\rho_1^*[\Gamma_f] \cupprod \rho_3^* [\Gamma_g] \cupprod (\pi_4 \circ \rho_3)^* \alpha))
&=& \pi_{1*}(\rho_{1*}(\rho_1^*[\Gamma_f] \cupprod \rho_3^* [\Gamma_g] \cupprod (\pi_4 \circ \rho_3)^* \alpha))\\
\stackrel{\rm PP}{=} \pi_{1*}([\Gamma_f] \cupprod \rho_{1*}(\rho_3^* [\Gamma_g] \cupprod (\pi_4 \circ \rho_3)^* \alpha)) &=&
\pi_{1*}([\Gamma_f] \cupprod \rho_{1*} (\rho_3^*([\Gamma_g] \cupprod \pi_4^* \alpha))) \\ \stackrel{\Diamond}{=} \pi_{1*}([\Gamma_f] \cupprod \pi_{2}^* (\pi_{3*}([\Gamma_g] \cupprod \pi_4^* \alpha)))  &=& f^* g^* \alpha.
\end{eqnarray*}
Here, all unlabeled equalities follow from commutativity of Diagram (\ref{MASSIVE_DIAGRAM}) and the equality labeled PP follows from the Push-Pull formula.
To check $\Diamond$, one must show for any $\beta \in H^*(Y \times Z)$ that
\begin{eqnarray}
(\rho_{1*} \circ \rho_3^*) \beta = (\pi_2^* \circ \pi_{3*}) \beta.
\end{eqnarray}
This follows easily by expanding $\beta$ using the K\"unneth formula and applying Lemma  \ref{LEM:KUNNETH_PROJECTION}.



\vspace{0.1in}
We'll now check that if $\rho_{2*}(\rho_1^*[\Gamma_f] \cupprod \rho_3^* [\Gamma_g]) \neq [\Gamma_{g
\circ f}]$, then $f^* g^* \neq (g \circ f)^*$.
Let
\begin{eqnarray*}
\rho_{2*}(\rho_1^*[\Gamma_f] \cupprod \rho_3^* [\Gamma_g]) = [\Gamma_{g \circ f}] + \mathcal{E}.
\end{eqnarray*}
By linearity of (\ref{EQN:COMPOSED_PULLBACK0}), it suffices to find some $\alpha \in H^*(Z)$ with $\pr_{1*}(\mathcal{E} \cupprod \pr_2^* \alpha) \neq 0$. 
For each $i=0, \ldots,2k$, let $\gamma_{i,1},\ldots,\gamma_{i,j_i}$ be a basis of $H^i(X)$ and let $\eta_{i,1},\ldots,\gamma_{i,l_i}$ be a basis of $H^i(Z)$.  Using the K\"unneth Isomorphism, we have
\begin{eqnarray*}
\kappa^{-1}(\mathcal{E}) = \sum_{i=0}^{2k} \sum_{j=1}^{j_i} \sum_{l=1}^{l_{2k-i}} a_{i,j,l} \gamma_{i,j} \otimes \eta_{2k-i,l}.
\end{eqnarray*}
Since $\mathcal{E} \neq 0$, there is some 
$a_{i_0,j_0,l_0} \neq 0$.  Since we are using field coefficients the cup product
is a duality pairing; see \cite[Ch. VI, Thm. 9.4]{BREDON}.
We can therefore find some $\alpha \in H^{i_0}(Z)$ so that $\eta_{2k-i_0,l_0} \cupprod
\alpha = [z_\bullet]$ and $\eta_{2k-i_0,l} \cupprod \alpha = 0$ for every $l \neq l_0$.  (Here, $[z_\bullet]$ is the fundamental cohomology class of a point $z_\bullet \in Z$ and a generator of $H^{2k}(Z)$.)
This implies that
\begin{eqnarray*}
\kappa^{-1}(\mathcal{E} \cupprod \pr_2^* \alpha) &=& \sum_{i=0}^{2k} \sum_{j=1}^{j_i} \sum_{l=1}^{l_{2k-i}} a_{i,j,l} \gamma_{i,j} \otimes (\eta_{2k-i,l} \cupprod \pr_2^* \alpha)  \\ &=& a_{i_0,j_0,l_0} (\gamma_{i_0,j_0} \otimes [z_\bullet]) + \sum_{i=i_0+1}^{2k} \sum_{j=1}^{j_i} \sum_{l=1}^{l_{2k-i}} a_{i,j,l} \gamma_{i,j} \otimes (\eta_{2k-i,l} \cupprod \pr_2^* \alpha).
\end{eqnarray*}
Lemma \ref{LEM:KUNNETH_PROJECTION} implies
\begin{eqnarray*}
\pr_{1*}(\mathcal{E} \cupprod \pr_2^* \alpha) = a_{i_0,j_0,l_0} \gamma_{i_0,j_0} \neq 0.
\end{eqnarray*}
we conclude that $f^* \circ g^* (\alpha) \neq (g \circ f)^* (\alpha)$.
\end{proof}

\section{Criteria for functoriality} \label{SEC:CRITERIA}

We'll now start our study of the intersection $\rho_1^{-1}(\Gamma_f) \cap \rho_3^{-1}(\Gamma_g)$.  Let
\begin{eqnarray}\label{EQN:U}
U:=\{(x,y,z) \in X \times Y \times Z \, : \, (x,y) \in \Gamma_f,\, x \not \in I_f, \, (y,z) \in \Gamma_g, \, \mbox{and} \, y \not \in I(g) \}.
\end{eqnarray}
\begin{lemma}\label{LEM:PRINCIPAL_COMP}
We have
\begin{itemize}
\item[(i)] $\rho_1^{-1}(\Gamma_f)$ and $\rho_3^{-1}(\Gamma_g)$ are smooth and intersect transversally  at points of $U$ and
\item[(ii)] $V = \overline{U}$ is an irreducible component of $\rho_1^{-1}(\Gamma_f) \cap \rho_3^{-1}(\Gamma_g) \subset X \times Y \times Z$ that is mapped to $\Gamma_{g \circ f}$ by $\rho_2$ with topological degree $1$.
\end{itemize}
Consequently, if $U$ is dense in $\rho_1^{-1}(\Gamma_f) \cap \rho_3^{-1}(\Gamma_g)$ then $(g \circ f)^* = f^* \circ g^*$ 
on all cohomology.
\end{lemma}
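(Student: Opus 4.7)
My plan is to prove the two parts locally at points of $U$ and then read off the consequence from Proposition \ref{PROP:FUNCTOR} together with Lemmas \ref{LEM:CUP_PRODUCT} and \ref{LEM:PUSH_FORWARD_VARIETIES}.

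\textbf{Step 1 (smoothness at points of $U$).} Pick $(x,y,z)\in U$. Since $x\notin I_f$, there is a Zariski neighborhood of $x$ in $X$ on which $f$ is regular, so near $(x,y)$ the graph $\Gamma_f$ is the honest graph of a holomorphic map and is in particular smooth of dimension $k$ in $X\times Y$. Then $\rho_1^{-1}(\Gamma_f)=\Gamma_f\times Z$ near $(x,y,z)$, so it is smooth of codimension $k$ in $X\times Y\times Z$. The same argument applied to $g$ at $(y,z)$ (using $y\notin I_g$) shows that $\rho_3^{-1}(\Gamma_g)$ is smooth of codimension $k$ at $(x,y,z)$.

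\textbf{Step 2 (transversality at points of $U$).} At the point above, a direct computation of tangent spaces gives
\begin{eqnarray*}
T_{(x,y,z)}\rho_1^{-1}(\Gamma_f) &=& \{(v,df_x v,w) \, : \, v\in T_xX,\, w\in T_zZ\},\\
T_{(x,y,z)}\rho_3^{-1}(\Gamma_g) &=& \{(u,v,dg_y v) \, : \, u\in T_xX,\, v\in T_yY\}.
\end{eqnarray*}
Each has dimension $2k$, and their intersection is $\{(v,df_x v,dg_y df_x v):v\in T_xX\}$, which is $k$-dimensional. Therefore the sum has dimension $2k+2k-k=3k=\dim(X\times Y\times Z)$, which is exactly the transversality condition.

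\textbf{Step 3 (the component $V=\overline{U}$).} The map $U\to X$, $(x,y,z)\mapsto x$, is a biholomorphism onto $X\setminus (I_f\cup f^{-1}(I_g))$, so $U$ is smooth, connected, and $k$-dimensional. Hence $V:=\overline{U}$ is irreducible of dimension $k$. Since $k$ is precisely the expected dimension of $\rho_1^{-1}(\Gamma_f)\cap\rho_3^{-1}(\Gamma_g)$ (codimensions $k$ and $k$ in an ambient of dimension $3k$), $V$ is an irreducible component of the intersection. For the behavior of $\rho_2$, observe that on $U$ the map $\rho_2$ is the one-to-one parameterization $(x,y,z)\mapsto(x,z)=(x,g(f(x)))$, whose image is exactly the graph of $g\circ f$ over $X\setminus(I_f\cup f^{-1}(I_g))$. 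By Lemma \ref{LEM:GRAPH_MINUS_VARIETY} the closure of this image is $\Gamma_{g\circ f}$, so $\rho_2\colon V\to\Gamma_{g\circ f}$ is surjective of topological degree $1$.

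\textbf{Step 4 (the consequence).} Assume $U$ is dense in $\rho_1^{-1}(\Gamma_f)\cap\rho_3^{-1}(\Gamma_g)$. Then $V$ is the unique component of that intersection, and the two subvarieties are transverse at generic points of $V$ by Step 2. Combining $[\rho_1^{-1}(\Gamma_f)]=\rho_1^*[\Gamma_f]$ and $[\rho_3^{-1}(\Gamma_g)]=\rho_3^*[\Gamma_g]$ with Lemma \ref{LEM:CUP_PRODUCT}(i) yields
\begin{eqnarray*}
\rho_1^*[\Gamma_f]\cupprod\rho_3^*[\Gamma_g]=[V].
\end{eqnarray*}
Step 3 together with Lemma \ref{LEM:PUSH_FORWARD_VARIETIES}(i) gives $\rho_{2*}[V]=[\Gamma_{g\circ f}]$, and invoking Proposition \ref{PROP:FUNCTOR} concludes that $(g\circ f)^*=f^*\circ g^*$ on all cohomology.

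The main obstacle I expect is bookkeeping rather than depth: one must be careful that the ``expected component'' $V$ is honestly cut out with multiplicity one, which is exactly why the transverse-tangent-space calculation in Step 2 is needed. The rest reduces to general facts already in \S\ref{SEC:BACKGROUND}.
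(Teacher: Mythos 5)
Your proof is correct and follows essentially the same route as the paper's: smoothness and the tangent-space transversality computation at points of $U$, identification of $\rho_2(U)$ with the graph of $g\circ f$ over a dense open set via Lemma \ref{LEM:GRAPH_MINUS_VARIETY} together with injectivity of $\rho_2|_U$, and then Lemmas \ref{LEM:CUP_PRODUCT} and \ref{LEM:PUSH_FORWARD_VARIETIES} combined with Proposition \ref{PROP:FUNCTOR} for the consequence. The only cosmetic difference is that you deduce irreducibility of $V$ from $U$ being biholomorphic to a Zariski-open subset of $X$, while the paper deduces it from the irreducibility of $\Gamma_{g\circ f}$; both are fine.
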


We will call $V$ the {\em principal component} of  $\rho_1^{-1}(\Gamma_f) \cap \rho_3^{-1}(\Gamma_g)$.

\begin{proof}
Since $x \not \in I_f$ and $y \not \in I_g$, $\rho_1^*(\Gamma_f)$ and $\rho_3^*(\Gamma_g)$ are smooth at any $(x,y,z) \in U$.
For any $(x,y,z) \in U$ we have
\begin{eqnarray*}
T_{(x,y,z)} \rho_1^{-1}(\Gamma_f) &=& \{(u_1,Df_x u_1, w_1) \, : \, u_1 \in T_x X \, \mbox{and} \, w_1  \in T_z Z \} \, \mbox{and} \\
T_{(x,y,z)} \rho_3^{-1}(\Gamma_g) &=& \{(u_2,v_2, Dg_y v_2) \, : \, v_2 \in T_y Y \, \mbox{and} \, w_2  \in T_z Z \}.
\end{eqnarray*}
Therefore, $T_{(x,y,z)} \rho_1^{-1}(\Gamma_f) + T_{(x,y,z)} \rho_3^{-1}(\Gamma_g) = T_{(x,y,z)} X \times Y \times Z$, so that
$\rho_1^{-1}(\Gamma_f)$ and $\rho_3^{-1}(\Gamma_g)$ are transverse at $(x,y,z)$.

\vspace{0.1in}
Notice that $\rho_2(U)$ is the graph of $(g\circ f)_{|X \setminus (I_f \cup {f^{-1}(I(g))}}$, which is
dense in $\Gamma_{g \circ f}$, by Lemma~\ref{LEM:GRAPH_MINUS_VARIETY}.  Since $\rho_2$ is continuous and closed,
\begin{eqnarray}
\rho_2\left(V\right) = \rho_2\left(\overline{U} \right)  = \overline{\rho_2(U)} = \Gamma_{g\circ f}.
\end{eqnarray}
Finally, notice that $\rho_2: U \rightarrow \rho_2(U)$ is one-to-one since for points of $U$, $x$ completely determines $y$ and $z$.  In particular, since $\Gamma_{g\circ f}$ is irreducible, so is $V$.

If $U$ is dense in $\rho_1^{-1}(\Gamma_f) \cap \rho_3^{-1}(\Gamma_g)$, then by Lemmas \ref{LEM:CUP_PRODUCT} and \ref{LEM:PUSH_FORWARD_VARIETIES} we have
\begin{eqnarray}
\rho_{2*}([\rho_1^{-1} (\Gamma_f)] \cupprod [\rho_3^{-1} (\Gamma_g)]) = \rho_{2*}([V]) = [\Gamma_{g\circ f}].
\end{eqnarray}
It follows from Proposition \ref{PROP:FUNCTOR} that $(g \circ f)^* = f^* \circ g^*$ on all cohomology.
\end{proof}

Let us also prove one more helpful lemma:

\begin{lemma}\label{LEM:STABLE_EXTREME_CASES}
Let $X,Y,Z$ be algebraic manifolds of dimension $k$ and let $f: X \dashrightarrow Y$ and $g: Y \dashrightarrow Z$ be
rational maps.
If  $\alpha \in H^i(Z)$ for $i \in \{0,1,2k-1,2k\}$,
then $(g \circ f)^* \alpha = (f^* \circ g^*) \alpha$.
\end{lemma}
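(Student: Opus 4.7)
The plan is to use Proposition \ref{PROP:FUNCTOR} to rephrase the desired equality. That proposition says $(g\circ f)^*\alpha = (f^*\circ g^*)\alpha$ if and only if $\pr_{1*}(\mathcal{E} \cupprod \pr_2^*\alpha) = 0$, where
\[
\mathcal{E} := \rho_{2*}\bigl(\rho_1^*[\Gamma_f] \cupprod \rho_3^*[\Gamma_g]\bigr) - [\Gamma_{g\circ f}].
\]
Thus it suffices to describe $\mathcal{E}$ as a linear combination of classes of subvarieties of $X \times Z$ and then show that each piece is annihilated by $\pr_{1*}(\,\cdot\, \cupprod \pr_2^*\alpha)$ whenever $\alpha$ has degree $0$, $1$, $2k-1$, or $2k$.

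First I would unpack $\mathcal{E}$. By Lemma \ref{LEM:CUP_PRODUCT}(iii), $\rho_1^*[\Gamma_f] \cupprod \rho_3^*[\Gamma_g]$ is a $\mathbb{Z}$-linear combination of fundamental classes of $k$-dimensional subvarieties contained in the geometric intersection $\rho_1^{-1}(\Gamma_f) \cap \rho_3^{-1}(\Gamma_g)$. Lemma \ref{LEM:PRINCIPAL_COMP} ensures that the principal component $V$ appears with coefficient $1$ (by transversality at generic points of $V$) and that $\rho_{2*}[V] = [\Gamma_{g\circ f}]$. After pushing forward by $\rho_{2*}$ and subtracting, $\mathcal{E}$ is a linear combination of classes $[W']$ where $W' := \rho_2(W)$ is either zero (via Lemma \ref{LEM:PUSH_FORWARD_VARIETIES}) or an irreducible $k$-dimensional subvariety of $X \times Z$ coming from an ``extra'' component $W \subset \rho_1^{-1}(\Gamma_f) \cap \rho_3^{-1}(\Gamma_g)$ with $W \not\subset V$.

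The heart of the proof, and the step I expect to be the main obstacle, is the dimension estimate
\[
\dim \pr_1(W') \leq k-1 \quad \text{and} \quad \dim \pr_2(W') \leq k-1
\]
for every such extra $W'$. Suppose, for contradiction, that $\pr_1(W') = X$. Then $\pr_1|_{W'} \colon W' \to X$ is generically finite and surjective, so for $x$ in a dense open subset of $X$ we may pick $(x,y,z) \in W$ with $x$ avoiding the proper closed subvariety $I_f \cup \pi_1(\pi_2^{-1}(I_g) \cap \Gamma_f) \subsetneq X$. Then $x \notin I_f$ forces $y = f(x)$ to be unique, $f(x) \notin I_g$ forces $z = g(f(x))$ to be unique, and so $(x,y,z) \in U$. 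Hence $W$ has a dense open subset inside $U$, giving $W \subset \overline{U} = V$, contradicting that $W$ is extra. The bound $\dim \pr_2(W') \leq k-1$ is obtained by a symmetric argument: for generic $z \in Z$ one has $z \notin g(I_g) \cup g(f(I_f))$, both proper closed subvarieties of $Z$, which forces any $(x,y,z) \in W$ above $z$ to satisfy $y \notin I_g$ and $x \notin I_f$, hence again to lie in $U$.

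With these bounds, Lemma \ref{LEM:COMPONENTS_CONTROLLED_DIMENSION}(i) applied with $p=1$ kills $\pr_{1*}([W'] \cupprod \pr_2^*\alpha)$ whenever $\alpha \in H^i(Z)$ with $i < 2$, and part (ii) applied with $p = k-1$ does the same for $i > 2k-2$. Together these two cases cover exactly $i \in \{0, 1, 2k-1, 2k\}$, so $\pr_{1*}(\mathcal{E} \cupprod \pr_2^*\alpha) = 0$ in each of the stated degrees, which gives the lemma.
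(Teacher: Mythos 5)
Your proposal is correct and follows essentially the same route as the paper's own proof: reduce via Proposition \ref{PROP:FUNCTOR} to the extra components $W$ beyond the principal one, show $\dim(\pr_1(W)) \leq k-1$ and $\dim(\pr_2(W)) \leq k-1$ by noting that a component surjecting onto $X$ (resp.\ $Z$) must generically avoid $I_f \cup f^{-1}(I_g)$ (resp.\ $g(I_g \cup f(I_f))$) and hence coincide with the principal component, and then apply Lemma \ref{LEM:COMPONENTS_CONTROLLED_DIMENSION} with $p=1$ and $p=k-1$. The only difference is expository: you spell out the reduction to $\mathcal{E}$ and the genericity argument in more detail than the paper does.
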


\begin{proof}
Since $f$ is dominant, $f^{-1}(I_g) \cup I_f$ is a proper subvariety of $X$.  Thus, 
any irreducible component of $\rho_1^{-1}(\Gamma_f) \cap
\rho_3^{-1}(\Gamma_g)$ that projects under $\pi_1 \circ \rho_1$ onto all of $X$
is equal to the principal component $V$.
Similarly, since $g$ is dominant $g(I_g \cup f(I_f))$ is a proper subvariety of $Z$, implying
that any irreducible component of $\rho_1^{-1}(\Gamma_f) \cap
\rho_3^{-1}(\Gamma_g)$ that projects under $\pi_4 \circ \rho_3$ onto all of $Z$
is equal to the principal component $V$.

Thus, if $W \neq \Gamma_{g \circ f}$ is a $k$-dimensional subvariety of
$\rho_2(\rho_1^{-1}(\Gamma_f) \cap \rho_3^{-1}(\Gamma_g))$ whose fundamental
class appears in the expression for $\rho_{2*}(\rho_1^*[\Gamma_f] \cup \rho_3^*
[\Gamma_g])$, one finds that $\dim(\pr_1(W)) \leq k-1$ and $\dim(\pr_2(W)) \leq
k-1$.   It then follows from Proposition \ref{LEM:COMPONENTS_CONTROLLED_DIMENSION}
that if $i \in \{0,1,2k-1,2k\}$ and
$\alpha \in H^i(Z)$ then $\pr_{1*}([W] \cupprod \pr_2^* \alpha) = 0$.
Equation (\ref{EQN:COMPOSED_PULLBACK0}) then implies that $(g \circ f)^* \alpha = (f^* \circ g^*) \alpha$.
\end{proof}

We are now ready to prove Propositions \ref{PROP:FINITE_FIBERS} --
\ref{PROP:DS}.  For the reader's convenience we'll repeat the
statements before each of the proofs.

\begin{proposition12} 
Let $f: X \dashrightarrow Y$ and $g: Y \dashrightarrow Z$ be rational maps.
Suppose that there exits an algebraic manifold $\tilde{X}$ and holomorphic maps $\pr$ and $\tilde{f}$ making the following
diagram commute (wherever $f \circ \pr$ is defined)
\begin{equation} 
\xymatrix{\tilde{X} \ar[d]^\pr  \ar[dr]^{\tilde{f}}  & \\
X \ar @{-->}[r]^f & Y } \tag{\ref{GENERAL_RESOLUTION}}
\end{equation}
with the property that $\tilde{f}^{-1}(x)$ is a finite set for every $y \in Y$.  Then, $(g\circ f)^* = f^* \circ g^*$ on all cohomology groups.
\end{proposition12}

\begin{proof}
By Lemma \ref{LEM:PRINCIPAL_COMP}, it suffices to show that $U$, given
by (\ref{EQN:U}), is dense in  $\rho_1^{-1}(\Gamma_f)\cap\rho_3^{-1}(\Gamma_g)$.

Consider any $(x_\bullet,y_\bullet,z_\bullet) \in \rho_1^{-1} (\Gamma_f) \cap
\rho_3^{-1} (\Gamma_g)$.  We'll show that $(x_\bullet,y_\bullet,z_\bullet)$ is
the limit of a sequence $\{(x_n,y_n,z_n)\} \subset U$.  
Since $f(I_f) := \pi_2(\pi_1^{-1}(I_f) \cap \Gamma_f)$ is a proper subvariety of $Y$, 
Lemma \ref{LEM:GRAPH_MINUS_VARIETY} gives that 
$\Gamma_g$ is the closure
of the graph of $g|_{Y \setminus (f(I_f) \cup I_g)}$.  Therefore, we can choose a sequence 
$$\{(y_n,z_n)\} \in \{Y \times Z \, : \, (y,z) \in \Gamma_g \, \mbox{and} \, y
\not \in (f(I_f) \cup I_g)\}$$ with $(y_n,z_n) \rightarrow (y_\bullet,z_\bullet)$.  

Since $(x_\bullet,y_\bullet) \in \Gamma_f$, there exists $\tilde{x}_\bullet \in \tilde{X}$ with $\pr(\tilde{x}_\bullet) = x_\bullet$ and $\tilde{f}(\tilde{x}_\bullet) = y_\bullet$.  Since $\tilde{f}$ is a finite map, it is open.  Therefore
we can choose a sequence of preimages $\tilde{x}_n$ of $y_n$ under $\tilde{f}$ with $\tilde{x}_n \rightarrow \tilde{x}_\bullet$.
If we let $x_n = \pr(\tilde{x_n})$, by continuity of $\pr$ we have $x_n \rightarrow x_\bullet$.
Since $y_n \not \in f(I_f)$ we have that each $x_n \not \in I_f$.  Therefore, we have found a sequence $(x_n,y_n,z_n) \in U$ with $(x_n,y_n,z_n) \rightarrow (x_\bullet,y_\bullet,z_\bullet)$.
\end{proof}

\begin{proposition14}
{\rm (Diller-Favre \cite[Prop. 1.13]{DILLER_FAVRE})} Let $X,Y,$ and $Z$ be algebraic manifolds of dimension $2$.  Then
$(g\circ f)^* = f^* \circ g^*$ if and only if there is no curve $C \subset X$
with $f(C \setminus I_f) \subset I_g$.   
\end{proposition14}

\begin{remark}
In the case that $(g\circ f)^* \neq f^* \circ g^*$, it follows from Lemma \ref{LEM:STABLE_EXTREME_CASES} that
the discrepancy happens on $H^2(Z)$.
\end{remark}

\begin{proof}
Suppose that there is no curve $C$ with $f(C \setminus I_f) \subset I_g$.   By
Lemma \ref{LEM:PRINCIPAL_COMP}, it suffices to show that the set $U$, given by
(\ref{EQN:U}), is dense in $\rho_1^{-1}(\Gamma_f) \cap
\rho_3^{-1}(\Gamma_g)$.

Let $(x_\bullet,y_\bullet,z_\bullet) \in \rho_1^{-1} (\Gamma_f) \cap
\rho_3^{-1} (\Gamma_g)$. 
If $y_\bullet \not \in I_g$, then we can choose a sequence $\{(x_n,y_n)\} \subset \Gamma_f$ converging to $(x_\bullet,y_\bullet)$ with
each $x_n \not \in I_f$.  Since $y_\bullet \not \in I_g$ and $I_g$ is closed, $y_n \not \in I_g$ for large enough $n$.  Letting
$z_n = g(y_n)$, we obtain a sequence $\{(x_n,y_n,z_n)\} \subset U$ which converges to $(x_\bullet,y_\bullet,z_\bullet)$.

Now, suppose $y_\bullet \in I_g$.
As in the proof of Proposition \ref{PROP:FINITE_FIBERS} we will use that $\Gamma_g$ is the closure
of the graph of $g|_{Y \setminus (f(I_f) \cup I_g)}$.
Therefore, we can choose a sequence
$\{(y_n,z_n)\} \subset \Gamma_g$ with $(y_n,z_n) \rightarrow
(y_\bullet,z_\bullet)$ and each $y_n \not \in f(I_f) \cup I_g$.  We must show
that there is a sequence $x_n \in X \setminus I_f$ with $f(x_n) = y_n$ and $x_n
\rightarrow x_\bullet$.

Since $X$ is a
surface, we can make a resolution of indeterminacy of the form
(\ref{GENERAL_RESOLUTION}) where $\pr$ consists of a sequence of point blow-ups
over $I_f$.  Since $(x_\bullet,y_\bullet) \in \Gamma_f$, there exists
$(\tilde{x}_\bullet,y_\bullet) \in \Gamma_{\tilde{f}}$ with
$\pr(\tilde{x_\bullet}) = x_\bullet$.  Let $D$ be the component of
$\tilde{f}^{-1}(y_\bullet)$ containing $\tilde{x}_\bullet$.  Since $y_\bullet
\neq f(C \setminus I_g)$ for any curve $C \subset X$, $\pr(D) = \{x_\bullet\}$.

Since $\tilde{f}(D) = y_\bullet$ and $D$ is a component of
$\tilde{f}^{-1}(y_\bullet)$, we can choose a sequence $\tilde{x_n} \in
\tilde{X}$ with $\tilde{f}(\tilde{x}_n) = \tilde{y}_n$ such that $\tilde{x_n}
\rightarrow D$.  Since $\pr(D) = x_\bullet$, the desired sequence $x_n \in X \setminus I_f$ is $x_n = \pr(\tilde{x}_n)$.

\vspace{0.1in}
Now, suppose that there are curves  $C_1,\ldots,C_m \subset X$ with $\{y_i\} := f(C_i \setminus I_f)
\subset I_g$ for each $i$.  
For each $i$, $g(y_i) = D_i$ is a curve in $Z$.  Then,
\begin{eqnarray*}
\rho_1^{-1} (\Gamma_f) \cap \rho_3^{-1} (\Gamma_g) = V \cup \bigcup_{i=1}^m C_i \times \{y_i\} \times D_i
\end{eqnarray*}
with each term in the union being an independent irreducible component and $V= \overline{U}$ being the principal component.

Since each component has complex dimension $2$, by Lemma \ref{LEM:CUP_PRODUCT} 
\begin{eqnarray*}
\rho_1^*[\Gamma_f] \cupprod \rho_3^* [\Gamma_g] = [V] + \sum_{i=1}^m a_i [C_i \times \{y_i\} \times D_i],
\end{eqnarray*}
where each $a_i > 0$ is a suitable intersection number.

By Lemmas  \ref{LEM:PRINCIPAL_COMP} and \ref{LEM:PUSH_FORWARD_VARIETIES},
\begin{eqnarray*}
\rho_{2*}\left([V] + \sum_{i=1}^m a_i [C_i \times \{y_i\} \times D_i]\right) = [\Gamma_{g\circ f}] + \sum_{i=1}^m a_i [C_i \times D_i].
\end{eqnarray*}
Since $\sum_{i=1}^m a_i [C_i \times D_i] \neq 0$, it follows from Proposition \ref{PROP:FUNCTOR} that $(g \circ f)^* \neq f^* \circ g^*$.
\end{proof}



\begin{remark} \label{REM:NON_PROPER_INTERSECTION}
Up to this point, we have only needed the simple cases (i) and (ii) of
Lemma~\ref{LEM:CUP_PRODUCT} in which the subvarieties intersect with the
correct dimension.  The proofs of the criteria of Bedford-Kim and Dinh-Sibony
below rely upon case (iii) of Lemma \ref{LEM:CUP_PRODUCT}, since one can easily
have components of $\rho_1^{-1}(\Gamma_f) \cap \rho_3^{-1}(\Gamma_g)$ of
dimension $> k$.  For example, if $f: \mathbb{CP}^3 \dashrightarrow
\mathbb{CP}^3$ and $g: \mathbb{CP}^3 \dashrightarrow \mathbb{CP}^3$ are both
Cremona involutions
\begin{eqnarray}\label{EQN:CREMONA}
[x_1:x_2:x_3:x_4] \mapsto [x_2 x_3 x_4 : x_1 x_3 x_4: x_1 x_2 x_4: x_1 x_2 x_3]
\end{eqnarray}
then $\rho_1^{-1}(\Gamma_1) \cap \rho_3^{-1}(\Gamma_g)$ contains a four-dimensional component
\begin{eqnarray*}
\{x_0 = 0\} \times \{[1:0:0:0]\} \times \{z_0 = 0\}.
\end{eqnarray*}
\end{remark}

\begin{proposition15}
\rm (Bedford-Kim \cite[Thm. 1.1]{BEDFORD_KIM2})
Let $X,Y,$ and $Z$ be  algebraic manifolds of dimension $k$.
Let $f: X \dashrightarrow Y$ and $g: Y \dashrightarrow Z$ be rational maps.   If there is no hypersurface $H$
with $f(H \setminus I_f) \subset I_g$, then $f^* \circ g^* = (g\circ f)^*$ on $H^2(Z)$.
\end{proposition15}

\begin{proof}
By Lemmas \ref{LEM:CUP_PRODUCT} and \ref{LEM:PUSH_FORWARD_VARIETIES}, 
\begin{eqnarray}\label{EQN:COMPOSED_CLASS}
\rho_{2*}(\rho_1^*[\Gamma_f] \cupprod \rho_3^* [\Gamma_g]) = [\Gamma_{g \circ f}] + \sum a_i [W_i],
\end{eqnarray}
where each $W_i$ is a $k$-dimensional subvariety of $X \times Z$ and each $a_i \in \mathbb{Z}$.  
By Lemma \ref{LEM:COMPONENTS_CONTROLLED_DIMENSION}, it suffices to show for every $i$ that
$\dim(\pr_1(W_i)) < k-1$.  

Suppose for some $i = i_0$ that 
$\dim(\pr_1(W_{i_0})) \geq k-1$.  Then, by
commutativity of (\ref{MASSIVE_DIAGRAM}), $V_{i_0} := \pr_2^{-1}(W_{i_0})$ satisfies
that $\dim(\pi_1 \circ \rho_1(V_{i_0})) \geq k-1$.  
The hypothesis that there is no hypersurface $H$ with $f(H \setminus I_f) \subset I_g$ implies that $\dim(f^{-1}(I_g)) \leq k-2$.  Therefore $\dim(I_f \cup f^{-1}(I_g)) \leq k-2$.
Thus, there is a dense set of points $(x,y,z) \in V_{i_0}$ with $x \not \in I_f \cup f^{-1}(I_g)$.  All such points 
are in the principal component $V$; therefore $V = V_{i_0}$. This contradicts that $\rho_2(V_{i_0}) = W_{i_0} \neq \Gamma_{g \circ f} = \rho_2(V)$.
We conclude that, $\dim(\pr_1(W_i)) \leq k-2$ for every $i$.
\end{proof}

\begin{remark}
Recently, Bayraktar \cite[Thm. 5.3]{BAYRAKTAR} has proved that the Bedford-Kim criterion is necessary, i.e. if 
there is a hypersurface $H \subset X$ with $f(H \setminus I_f) \subset I_g$, then $(g\circ f)^* \neq f^* \circ g^*$
on $H^2(Z)$.

This does not seem to follow from the results developed in this note,
since,
when $k = dim(X) \geq 3$, $\rho_1^{-1}(\Gamma_f) \cap \rho^{-1}(\Gamma_g)$ may have
components of dimension $> k$.  (See Remark~\ref{REM:NON_PROPER_INTERSECTION}.)  
For this reason, the cup product $\rho_1^* [\Gamma_f] \cupprod \rho_3^* [\Gamma_g]$ may be represented by some $k$-dimensional subvarieties having negative coefficients.    In particular, one must prove that the cohomology classes from all of
the extra components of $\rho_1^{-1}(\Gamma_f) \cap \rho^{-1}(\Gamma_g)$ don't completely cancel.
\end{remark}

\begin{remark} 
There is an older criterion of Bedford and Kim \cite[Prop. 1.2]{BEDFORD_KIM}, which one can
check is strictly weaker than the one stated in Proposition \ref{PROP:BK}.
\end{remark}

Recall that  $\tilde{\Sigma}'_f \subset \Gamma_f$ is the set of points such that 
\begin{enumerate}
\item[(i)] $\pi_2$ restricted to $\Gamma_f$ is not locally finite at $x$, and
\item[(ii)] $\pi_2((x,y)) \in I_g$ for every $(x,y) \in \tilde{\Sigma}'_f$. 
\end{enumerate}
 Let $\Sigma'_f:=\pi_1\left(\tilde{\Sigma}'_f\right)$.

\begin{proposition16}
{\rm (Variant of Dinh-Sibony \cite[Prop. 5.3.5]{DINH_SIBONY}) } Let $X,Y,$ and $Z$ be algebraic manifolds of dimension $k$.
Let $f: X \dashrightarrow Y$ and $g: Y \dashrightarrow Z$ be rational maps.  If $\dim \Sigma'_f < k-p$, then $(g\circ f)^* = f^* \circ g^*$
on $H^i(Z)$ for $1 \leq i \leq 2p$.
\end{proposition16}

\begin{proof}
As in the proof of Proposition \ref{PROP:BK}, $\rho_{2*}(\rho_1^*[\Gamma_f] \cupprod \rho_3^* [\Gamma_g])$
can be expressed by (\ref{EQN:COMPOSED_CLASS}).  By Lemma \ref{LEM:COMPONENTS_CONTROLLED_DIMENSION} it suffices to show 
for every $i$ that $\dim(\pr_1(W_i)) < k-p$.

Suppose for some $i = i_0$ that
$\dim(\pr_1(W_{i_0})) \geq k-p$.  Then, by
commutativity of (\ref{MASSIVE_DIAGRAM}), $V_{i_0} := \pr_2^{-1}(W_{i_0})$ satisfies
that $\dim(\pi_1 \circ \rho_1(V_{i_0})) \geq k-p$.  We'll show that the set $U$, given by (\ref{EQN:U}), is dense in $V_{i_0}$,
implying that $V_{i_0} = V$.   This will contradict that $\rho_2(V_{i_0}) = W_{i_0} \neq \Gamma_{g \circ f} = \rho_2(V)$.

Since $\dim(\Sigma'_f) < k-p$, we have that $V_{i_0} \setminus (\pi_1 \circ
\rho_1)^{-1}(\Sigma'_f)$ is dense in $V_{i_0}$.  Therefore, it suffices to show
that $U$ is dense in $V_{i_0} \setminus (\pi_1 \circ \rho_1)^{-1}(\Sigma'_f)$.
Let $$(x_\bullet,y_\bullet,z_\bullet) \in V_{i_0} \setminus (\pi_1 \circ
\rho_1)^{-1}(\Sigma'_f).$$  First suppose that $y_\bullet \not \in I_g$.  Then, we can choose a sequence $(x_n,y_n) \in \Gamma_f$
with $x_n \not \in I_f$ converging to $(x_\bullet,y_\bullet)$.  Since $y_\bullet \not \in I_g$ and $I_g$ is closed, $y_n \not \in I_g$ for large enough $n$.  Thus, if we let $z_n = g(y_n)$, we obtain a sequence $(x_n,y_n,z_n) \in U$ that converges to $(x_\bullet,y_\bullet,z_\bullet)$.

Now suppose that $y_\bullet \in I_f$.  
By Lemma \ref{LEM:GRAPH_MINUS_VARIETY}, $\Gamma_g$ is the closure of the
graph of $g|_{Y \setminus (I_g \cup f(I_f))}$.  Thus, we can find a sequence
$(y_n,z_n)$ in the graph of $g|_{Y \setminus (I_g \cup f(I_f))}$ with
$(y_n,z_n) \rightarrow (y_\bullet,z_\bullet)$.  Meanwhile, since $x_\bullet~\not \in~\Sigma'_f$ and $y_\bullet \in I_g$,
$\pi_2|\Gamma_f$ is a finite map in a neighborhood of $(x_\bullet,y_\bullet)$.
It follows from the Weierstrass Preparation Theorem that $\pi_2|_{\Gamma_f}$ is an open map
in that neighborhood.  Therefore, there is a sequence $x_n \in X$ with
$(x_n,y_n) \in \Gamma_f$ and $(x_n,y_n)~\rightarrow~(x_\bullet,y_\bullet)$.
Since $y_n \not \in f(I_f)$, $x_n \not \in I_f$.  Thus, we have found a
sequence $\{(x_n,y_n,z_n)\} \subset U$ with $(x_n,y_n,z_n) \rightarrow
(x_\bullet,y_\bullet,z_\bullet)$.  We conclude that $U$ is dense in $V_{i_0}$.
\end{proof}

\begin{remark} The reader who is interested in proving Proposition \ref{PROP:DS} using currents should note that Truong \cite{TRUONG} presents
an approach to pulling back $(p,p)$-currents for $p > 1$ that is somewhat different from \cite{DINH_SIBONY}.  In particular,  one can also use Theorem 7 from \cite{TRUONG} to prove Proposition \ref{PROP:DS}.
\end{remark}

\section{A cautionary example}\label{SEC:CONCLUSION}

In this section, we present a rational map $f:X \dashrightarrow X$ of a
three-dimensional algebraic manifold $X$ that is not $2$-stable, but has the property
that $\left(f|_{X \setminus I_f}\right)^{-1}(x)$ is a finite set for every $x \in X$.  

Let $f_0: \P^3 \dashrightarrow \P^3$ be the composition $f_0 = \alpha_0 \circ s_0$, where 
$\alpha_0$ is the birational map
{\small
\begin{align}
& \alpha_0([x_1:x_2:x_3:x_4]) \\ & = [x_1(x_2-x_4)(x_3-x_4):x_4(x_2-x_4)(x_3-x_4):x_4(x_2-x_1)(x_3-x_4):x_4(x_2-x_4)(x_3-x_1)] \nonumber
\end{align}
}
and $s_0$ is the squaring map
\begin{align}
s_0([x_1:x_2:x_3:x_4]) = [x_1^2:x_2^2:x_3^2:x_4^2].
\end{align}
Let $\varrho: X \rightarrow \P^3$ be the blow-up of $\P^3$ at the five points $[1:0:0:0],[0:1:0:0],[0:0:1:0],[0:0:0:1],$ and $[1:1:1:1]$ and let 
$\alpha: X \dashrightarrow X$, $s: X \dashrightarrow X$, and $f$ be the lifts of $\alpha_0$, $s_0$, and $f_0$.

\begin{proposition} $f: X \dashrightarrow X$ satisfies for every $x \in X$ that $\left(f|_{X \setminus I_f}\right)^{-1}(x)$ is a finite set, but $f$ is not $2$-stable.
\end{proposition}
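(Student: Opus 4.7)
The plan splits into two independent parts, both leaning on the factorization $f = \alpha \circ s$ and the local geometry of $\varrho: X \to \P^3$.

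For the finite-fibre property, I would first note that the squaring map $s_0: \P^3 \to \P^3$ is a regular morphism of degree $8$ and fixes each of the five blow-up centres, so by the universal property of blowing up it lifts to a regular finite map $s: X \to X$; in particular $s^{-1}$ preserves finiteness. It therefore suffices to show that $\alpha|_{X \setminus I_\alpha}$ has finite fibres, equivalently that $\alpha$ collapses no positive-dimensional subvariety of $X$ to a point. Direct inspection of the defining polynomials shows that $\alpha_0$ collapses exactly four hyperplanes: $\{x_4 = 0\} \mapsto [1{:}0{:}0{:}0]$, $\{x_1 = x_4\} \mapsto [1{:}1{:}1{:}1]$, $\{x_2 = x_4\} \mapsto [0{:}0{:}1{:}0]$, and $\{x_3 = x_4\} \mapsto [0{:}0{:}0{:}1]$, all four targets lying among the five centres of $\varrho$. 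An analogous analysis of $\alpha_0^{-1}$ shows that its four collapsed hypersurfaces land on the complementary base points of $\alpha_0$, so $\varrho$ is tailored precisely to resolve all of the collapse behaviour of $\alpha$ and $\alpha^{-1}$ simultaneously. A local coordinate computation at each of the five centres then verifies that, on $X$, the proper transform of each collapsed hyperplane maps surjectively and generically finitely onto the exceptional divisor over its target, and each exceptional divisor maps onto the proper transform of some hyperplane. Hence $\alpha$ has no exceptional divisor on $X$ and collapses no curve to a point; consequently $(f|_{X \setminus I_f})^{-1}(x) = s^{-1}(\alpha^{-1}(x))$ is finite for every $x \in X$.

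For the failure of $2$-stability, my first step is to exhibit the obstruction predicted by the variant Proposition~\ref{PROP:DS}, namely a curve sitting inside $\Sigma'_f$. The indeterminacy locus of $\alpha_0$ in $\P^3$ is a union of six lines (three coordinate lines in $\{x_4 = 0\}$ and three lines through $[1{:}1{:}1{:}1]$); their proper transforms still constitute part of $I_\alpha$ on $X$, and hence $I_f = s^{-1}(I_\alpha)$ remains a union of curves. An analysis of $\Gamma_f$ along these curves reveals that $\pi_2|_{\Gamma_f}$ fails to be locally finite over a $1$-dimensional family whose image lies in $I_f$, yielding $\dim \Sigma'_f \geq 1 = k-p$ with $k=3$, $p=2$, so the sufficient criterion of Proposition~\ref{PROP:DS} is not applicable. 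To confirm that $2$-stability itself actually fails, I would then apply Proposition~\ref{PROP:FUNCTOR} directly: the class $\rho_{2*}(\rho_1^*[\Gamma_f] \cupprod \rho_3^*[\Gamma_f])$ differs from $[\Gamma_{f^2}]$ by fundamental classes of excess-intersection components $W$ (Lemma~\ref{LEM:CUP_PRODUCT}(iii)) whose first projections lie in the obstruction curve and therefore satisfy $\dim \pr_1(W) \leq k-2$. By Lemma~\ref{LEM:COMPONENTS_CONTROLLED_DIMENSION} such components contribute zero on $H^i(X)$ for $i < 4$ (consistent with $f$ being $1$-stable by Proposition~\ref{PROP:BK}, since no hypersurface is collapsed into $I_f$), but they produce a nontrivial contribution when paired against a suitably chosen class in $H^{2,2}(X)$, giving $(f^*)^2 \neq (f^2)^*$.

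The principal obstacle I anticipate is the last step: verifying by explicit intersection numbers that the excess component $W$ enters $\rho_{2*}(\rho_1^*[\Gamma_f] \cupprod \rho_3^*[\Gamma_f])$ with nonzero coefficient and is not accidentally cancelled by other components of the improper intersection $\rho_1^{-1}(\Gamma_f) \cap \rho_3^{-1}(\Gamma_f)$, which can have dimension strictly greater than $k$. This requires honest intersection-theoretic bookkeeping on $X \times X \times X$ in the basis of $H^*(X)$ consisting of the hyperplane pull-back and the five exceptional-divisor classes. The finite-fibre verification, by contrast, is a bounded list of local checks at the five blown-up points.
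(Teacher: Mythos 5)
Your outline of the first half is broadly in the right spirit, but it contains one factual slip: $s$ does \emph{not} lift to a regular map of $X$. The universal property of blow-ups fails at the seven points of $s_0^{-1}([1{:}1{:}1{:}1])\setminus\{[1{:}1{:}1{:}1]\}$, whose images are blown up in the target but which are not blown up in the source; these become indeterminacy points of $s$, each exploding to $E_{[1:1:1:1]}$. The finite-fibre conclusion survives because these points end up inside $I_f$, but the argument as you state it (``$s$ is a regular finite map, so it suffices to treat $\alpha$'') is not correct as written and needs to be rephrased as: neither $s$ nor $\alpha$ collapses any positive-dimensional variety \emph{outside its indeterminacy set}, hence neither does $f=\alpha\circ s$.

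The second half has a genuine gap, and you have in fact named it yourself: you never show that the extra components of $\rho_1^{-1}(\Gamma_f)\cap\rho_3^{-1}(\Gamma_g)$ contribute a nonzero class, and you leave open the possibility of cancellation among excess-intersection terms with coefficients of mixed sign. Observing that the hypothesis of Proposition~\ref{PROP:DS} fails proves nothing, since that criterion is only sufficient. The two ideas that close the gap are exactly the ones missing from your plan. First, a projection/dimension count shows that \emph{every} component $V'\neq V$ of the triple intersection has dimension exactly $k=3$: one has $(\pi_1\circ\rho_1)(V')\subset I_f$ and $(\pi_2\circ\rho_1)(V')\subset I_g$, and if $\dim V'\ge 4$ the generic fibre of $V'\to X\times Y$ would force a positive-dimensional family of points of $Y$ blown up to surfaces, which is impossible; hence case (ii) of Lemma~\ref{LEM:CUP_PRODUCT} applies and all coefficients are \emph{positive} integers, so no cancellation can occur and no ``honest bookkeeping'' in $H^*(X\times X\times X)$ is needed. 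Second, one must exhibit a concrete non-principal component with nonvanishing pushforward: taking $L_1$ the proper transform of $\{x_1=x_3=x_4\}$, $p=\varrho^{-1}([1{:}1{:}1{:}-1])$, and $H$ the proper transform of $\{z_1=z_2\}$, the flip indeterminacy of $\alpha$ along $L_1$ together with the blow-up of $p$ by $s$ gives $V'=L_1\times\{p\}\times H\subset\rho_1^{-1}(\Gamma_f)\cap\rho_3^{-1}(\Gamma_g)$, and $\rho_2$ maps $V'$ biholomorphically onto $L_1\times H$, so $\rho_{2*}([V'])=[L_1\times H]\neq 0$. Without these two steps your argument does not establish $(f^2)^*\neq(f^*)^2$.
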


\begin{proof}
We will start by showing that $\left(f|_{X \setminus I_f}\right)^{-1}(x)$ is a finite set for every $x \in X$.
One can check that
$\alpha_0: \P^3 \dashrightarrow \P^3$ satisfies
\begin{eqnarray*}
I_{\alpha_0} =&& \{x_1=x_4=0\} \cup \{x_2=x_4=0\} \cup \{x_3= x_4=0\} \\ &&\cup \{x_1=x_2=x_4\} \cup \{x_1=x_3=x_4\}
\cup \{x_2= x_3 = x_4\}.
\end{eqnarray*}
The four points $[1:0:0:0],[0:1:0:0],[0:0:1:0],[1:1:1:1]$ where these six lines meet are each blown-up up by $\alpha_0$
to the following hyperplanes
\begin{eqnarray*}
\{x_2=0\}, \{x_2=x_3\}, \{x_2=x_4\}, \mbox{  and  } \{x_1 = x_2\},
\end{eqnarray*}
respectively.
Once these four points are removed, the six lines in $I_{\alpha_0}$ map by flip indeterminacy (see, for example, \cite{kollar2008birational}) to the lines
\begin{eqnarray*}
&& \{x_2=x_3=x_4\} \cup \{x_2=x_4=0\} \cup \{x_2= x_3=0\} \\ && \cup \{x_1=x_2=x_4\} \cup \{x_1=x_2=x_3\}
\cup \{x_1= x_2 = x_4\},
\end{eqnarray*}
respectively.   A flip indeterminacy from a line $L_1$ to a line $L_2$ blows up
any point $p \in L_1$ to all of $L_2$, based on the direction that one
approaches $p$ within a transversal plane.  Meanwhile, there is a {\em collapsing
behavior}: if one approaches any two points of $L_1$ with the same transversal
direction, one is sent by $f$ to the same point of $L_2$.  

The critical set of $\alpha_0$ consists of the hypersurfaces
\begin{eqnarray*}
\{x_4=0\} \cup \{x_1=x_4\} \cup \{x_2 = x_4\} \cup \{x_3=x_4\},
\end{eqnarray*}
which are all collapsed by $\alpha_0$ to the points
$[1:0:0:0],[1:1:1:1],[0:0:1:0],$ and $[0:0:0:1]$, respectively, with the first
three of these points in $I_{\alpha_0}$. 

When creating $X$, we have blown-up each of the images of the varieties that are collapsed by $\alpha_0$ as well
as $[0:0:0:1] = \alpha_0^{-1}([0:1:0:0])$.
Using the universal property of blow-ups \cite{EISENBUD_HARRIS} (or direct calculations), one can check the indeterminacy set of
$\alpha: X \dashrightarrow X$ is the proper transform of $I_{\alpha_0}$ and that $\alpha$ has no critical points (outside
of $I_\alpha$).  In particular, $\alpha$ collapses no curves or hypersurfaces lying outside of $I_\alpha$.

One can check that $s: X \dashrightarrow X$ is holomorphic in a neighborhood of
each of the exceptional divisors, inducing the squaring map on each of the
exceptional divisors $E_{[1:0:0:0]}$, $E_{[0:1:0:0]}$, $E_{[0:0:1:0]}$, $E_{[0:0:0:1]}$ and is
the identity on the exceptional divisor  $E_{[1:1:1:1]}$.  As a result, the
only indeterminacy points of $s$ are the lifts under $\varrho$ above the $7$
points in \\ $s_0^{-1}([1:1:1:1]) \setminus [1:1:1:1]$, each of which is blown-up
by $s$ to $E_{[1:1:1:1]}$.  Since $s_0$ did not collapse and
hypersurfaces or curves and $s$ does not collapse anything in these exceptional
divisors,  we conclude that $s$ also doesn't collapse and hypersurfaces or
curves.

The indeterminacy of $f = \alpha \circ s$ is contained in $s^{-1}(I_\alpha) \cup I_s$.
One can check that $I_s \subset I_f$ since $\alpha$ doesn't collapse $E_{[1:1:1:1]}$.
Meanwhile,
$s^{-1}(I_\alpha)$ is
the proper transform of the 15 lines
\begin{eqnarray*}
s_0^{-1}(I_{\alpha_0}) =&& \{x_1=x_4=0\} \cup \{x_2=x_4=0\} \cup \{x_3= x_4=0\} \\ &&\cup \{x_1=\pm x_2= \pm x_4\} \cup \{x_1=\pm x_3=\pm x_4\}
\cup \{x_2= \pm x_3 = \pm x_4\}.
\end{eqnarray*}
Taking any point from one of these $15$ lines that is not on one of the exceptional divisors,
one can use the homogeneous expression for $f_0$ to see that each such point is in $I_f$.  
Therefore, $I_f = s^{-1}(I_\alpha) \cup I_s$.  Since neither $\alpha$ nor
nor $s$ collapse any variety outside of their indeterminate sets, we conclude that $f$ 
doesn't collapse any variety outside of $I_f$.

\vspace{0.1in}

The only non-trivial cohomology groups of $X$ are 
\begin{eqnarray*}
H^{0}(X) \cong \mathbb{C}, H^2(X) = H^{(1,1)}(X)~\cong~\mathbb{C}^6, H^4(X) = H^{(2,2)}(X) \cong
\mathbb{C}^6, \, \mbox{and} \,  H^{6}(X) \cong \mathbb{C}.
\end{eqnarray*}  By Lemma
\ref{LEM:STABLE_EXTREME_CASES}, any rational map acts stably on $H^{0}(X)$ and
$H^{6}(X)$.  Since $I_f$ is of codimension $\geq 2$ and $f$ collapses nothing
outside of $I_f$, the Bedford-Kim criterion (Prop.  \ref{PROP:BK}) implies that
$f$ acts stably on $H^{(1,1)}(X)$.  Therefore, in order to prove that $f$ is
not $2$-stable, it suffices to show that $(f^2)^* \neq (f^*)^2$ on $H^*(X)$.  

For notational
convenience, we'll write the composition as $g \circ f$, where $X=Y=Z$ and $f:
X \dashrightarrow Y$ and $g: Y \dashrightarrow Z$ are the same map.  We will
first show that every component of $\rho_1^{-1}(\Gamma_f) \cap
\rho_3^{-1}(\Gamma_g)$ has the correct dimension ($= 3$), so that Lemma
\ref{LEM:CUP_PRODUCT} implies that each component appears with positive
multiplicity in the cup product $\rho_1^*([\Gamma_f]) \cupprod
\rho_3^*([\Gamma_g])$.  We will then show that there is at least one component
$V'$ of the intersection other than the principal component $V$ (see Lemma
\ref{LEM:PRINCIPAL_COMP} for the definition of the principal component) with
the property that $\rho_{2*}([V']) \neq 0$.  Non-functoriality $(f^2)^* \neq (f^*)^2$ will then follow from
Proposition \ref{PROP:FUNCTOR}.

Consider a component $V' \neq V$ of $\rho_1^{-1}(\Gamma_f) \cap
\rho_3^{-1}(\Gamma_g)$.  Since $V' \neq V$ and $f$ a finite map outside of
$I_f$, $(\pi_1 \circ \rho_1)(V') \subset I_f$.  Meanwhile, $(\pi_2 \circ
\rho_1)(V') \subset I_g$.  Let $\phi: V' \rightarrow \phi(V') \subset X \times
Y$ be the restriction of the projection onto the first two coordinates.  If
$\dim(V') \geq 4$, then, since $\dim(\phi(V')) \leq 2$, the fibers over generic
points satisfy $\dim(\phi^{-1}(x,y)) \geq 2$.  However, there are finitely many
points $y \in Y$ with $\dim(g(y)) \geq 2$, implying that $\dim(\pi_2 \circ
\rho_1)(V') = 0$.  Therefore, the dimension of the projections of $V'$ onto
$X$, $Y$, and $Z$ would be $1, 0$, and $2$, respectively, implying that
$\dim(V') = 3$.  

We will now find a component $V' \neq V \subset \rho_1^{-1}(\Gamma_f) \cap
\rho_3^{-1}(\Gamma_g)$ so that $\rho_{2*}([V']) \neq 0$.  
Let $L_1 \subset X$ be the proper transform of $\{x_1 = x_3 = x_4\}$, let $L_2 \subset Y$
be the proper transform of $\{y_1 = y_2 = y_3\}$, let $p = \varrho^{-1}([1:1:1:-1])
\in L_2$, and let $H \subset Z$ be the proper transform of $\{z_1 = z_2\}$.
Since $s$ maps a neighborhood of $L_1$ biholomorphically onto
a neighborhood $L_1$ and $\alpha$ blows-up each point of $L_1$ to all of $L_2$,  we have
that $L_1 \times \{p\} \subset \Gamma_f$.   Meanwhile, since $s$ blows up the
indeterminate point $p$ to $E_{[1:1:1:1]}$, which is mapped by $\alpha$ to the
plane $z_1 = z_2$, we have that $\{p\} \times H \subset \Gamma_g$.  We conclude
that
\begin{eqnarray*}
V':= L_1 \times \{p\} \times H \, \subset \, \rho_1^{-1}(\Gamma_f) \cap \rho_3^{-1}(\Gamma_g).
\end{eqnarray*}
Since $V'$ is an irreducible $3$-dimensional variety with $(\pi_1 \circ
\rho_1)(V) = L_1 \subsetneq X$, it is not the principal component $V$.
{\em It is the collapsing behavior of the flip indeterminacy along $L_1$ into the point of indeterminacy $p$ that produces this extra component of
$\rho_1^{-1}(\Gamma_f) \cap \rho_3^{-1}(\Gamma_g)$.  This will lead to non-functoriality of the composition.}

Since $\rho_2$ maps $V'$ biholomorphically to $L_1 \times H  \subset X
\times Z$,  $\rho_{2*}([V']) \neq 0$.
We conclude that $(g \circ f)^* \neq f^* \circ g^*$.
\end{proof}

\begin{remark}
In a joint work with S. Koch \cite{KR}, we check that $f$ can be lifted to a
further blow-up of $X$ on which Proposition \ref{PROP:FINITE_FIBERS} can be
applied.  We then compute that the first and second dynamical degrees of this
lift satisfy that $\lambda_1 \approx 2.3462$ is the largest root of  
\begin{eqnarray*}
p_1(z) = z^4 - z^3 - 4z - 8,
\end{eqnarray*}
$\lambda_2 \approx 4.6658$ is the largest root of 
\begin{eqnarray*}
p_2(z) = z^9-3z^8-16z^6-192z^5+384z^4+128z^3+6144z-8192.
\end{eqnarray*}

Since dynamical degrees are invariant under birational conjugacy,
these are the same as the dynamical degrees of $f: X \dashrightarrow X$.
However, one can check that $\dim(H^4(X)) = 6$.
Since $p_2$ is an irreducible polynomial of degree $9 > 6$, this gives an
alternate proof that $f:X \dashrightarrow X$ is not $2$-stable.
\end{remark}

\begin{question}
Does there exist a rational map $f: \mathbb{P}^3 \dashrightarrow \mathbb{P}^3$ such that $\left(f|_{\mathbb{P}^3 \setminus I_f}\right)^{-1}(x)$ is a finite set for every $x \in \mathbb{P}^3$, that is not $2$-stable?
\end{question}

\section*{Acknowledgements}
Much of this work is inspired by the techniques from \cite{AMERIK} and by the results of the other authors mentioned above.  We have also benefited from discussions with
Eric Bedford, Olguta Buse, Jeffrey Diller,  Kyounghee Kim, Sarah Koch, Patrick Morton, and Tuyen Truong.
Finally, we thank the referees for their several helpful suggestions.
This work is supported in part by the NSF grant DMS-1102597
and startup funds from the Department of Mathematics at IUPUI.

\end{document}